\title{A structure theorem for sets of small popular doubling}
\author{Przemys\l aw Mazur}
\address{Mathematical Institute, Radcliffe Observatory Quarter, Woodstock Road, Oxford OX2 6GG, United Kingdom}
\email{przemyslaw.mazur@maths.ox.ac.uk}
\newtheorem{thm}{Theorem}[section]
\newtheorem{prp}[thm]{Proposition}
\newtheorem{lm}[thm]{Lemma}
\theoremstyle{definition}
\newtheorem{cor}[thm]{Corollary}
\renewcommand{\le}{\leqslant}
\renewcommand{\ge}{\geqslant}
\renewcommand{\epsilon}{\varepsilon}
\renewcommand{\phi}{\varphi}
\begin{document}
 
 \begin{abstract}
  In this paper we prove that every set $A\subset\mathbb{Z}$ satisfying the  inequality $\sum_{x}\min(1_A*1_A(x),t)\le(2+\delta)t|A|$ for $t$ and $\delta$ in suitable ranges, then $A$ must be very close to an arithmetic progression. We use this result to improve the estimates of Green and Morris for the probability that a random subset $A\subset\mathbb{N}$ satisfies $|\mathbb{N}\setminus(A+A)|\ge k$; specifically we show that $\mathbb{P}(|\mathbb{N}\setminus(A+A)|\ge k)=\Theta(2^{-k/2})$.
 \end{abstract}
 
 \subjclass[2010]{11P70}

 \keywords{small popular doubling, structure theorem, coset, progression, regularity lemma}
 
 \maketitle
 
 \section{Introduction}
 
 Let us start with recalling Freiman $(3k-3)$ Theorem. It states that every finite subset $A\subset\mathbb{Z}$ satisfying $|A+A|<3|A|-3$ is contained in an arithmetic progression of length $|A+A|-|A|+1$. Comparing this with a lower bound $|A+A|\ge 2|A|-1$ valid for all nonempty finite subsets of $\mathbb{Z}$, we can see that this result describes sets for quite large range of values of $|A+A|$. Our goal is to give a similar result for a set with a few \emph{popular} sums. Note that it cannot be done dirctly; the reason is that the set $S_k(A)=\{x\in\mathbb{Z}:|A\cap (x-A)|\ge k\}$ of $k$-popular sums is empty if $k\ge 3$ and $A$ is a highly independent set. Instead, we need to consider a different quantity, namely the \emph{average} size of $S_k$ for $1\le k\le t$, which also appeared quite natural to Pollard in his work \cite{pollard-74} back in 1974.
 
 At this point it is convenient to use the notation of convolution. From now on, we will consider any abelian group $G$ to be equipped with the couning measure, which leads to the definition
 \begin{equation*}
  f*g(x)=\sum_{y\in G} f(y)g(x-y)
 \end{equation*}
 for any functions $f,g:G\to\mathbb{C}$ for which the above expression makes sense (i.e. is absolutely convergent; we will use it mostly for $f,g$ being indicator functions of finite sets). Having this notation, we can restate Pollard's theorem as
 \begin{equation*}
  \sum_{x\in\mathbb{Z}/p\mathbb{Z}} \min(1_A*1_B(x),t)\ge \min(|A|\cdot |B|,\ t\cdot\min(p,|A|+|B|-t))
 \end{equation*}
 for any prime $p$ and sets $A,B\subset\mathbb{Z}/p\mathbb{Z}$. It is not hard to prove the corresponding statement for subsets of the integers (and even easier to deduce it from Pollard's theorem); in particular, for a single set $A\subset\mathbb{Z}$ and an integer $0\le t\le |A|$ we have $\sum_{x\in\mathbb{Z}}\min(1_A*1_A(x),t)\ge t(2|A|-t)$. One can also prove (or deduce from Vosper's theorem \cite{vosper-56}, a corresponding statement for $\mathbb{Z}/p\mathbb{Z}$) that the only sets for which we have the equality in the above inequality are arithmetic progressions.
 
 Our goal is to extend this structure theorem to be able to recognize sets $A\subset\mathbb{Z}$ satisfying $\sum_{x\in\mathbb{Z}}\min(1_A*1_A(x),t)\le (2+\delta)t|A|$ for suitable ranges of parameters $t$ and $\delta$ as those that can be almost entirely covered by an arithmetic progression. Specifically, for $\frac{t}{|A|}\searrow 0$ we can pick $\delta$ as big as $\frac{1}{4}-O(\frac{t}{|A|})$. Note that we cannot expect all of $A$ to be covered by an arithmetic progression: a simple counterexample is an arithmetic progression with one extra point as far away as we like. The reader can check that for that set and $t$, $\frac{|A|}{t}$ sufficiently large the parameter $\delta$ can be as close to $0$ as we like, yet our set cannot be covered with an arithmetic progression of bounded length.
 
 In the next sections we use this result to slightly modify the regularity lemma proven by Green and Morris in \cite{green-15}, which allows us to improve the estimates on the probability that a sumset of a random subset $A$ of natural numbers misses at least $k$ elements. More precisely, we will show not only that the sequence $p_k=2^{k/2}\cdot\mathbb{P}(|\mathbb{N}\setminus (A+A)|\ge k)$ is bounded, but also that it is increasing (and therefore convergent) along indices of the same parity (i.e. odd or even).
 
 Before proceeding, let us state precisely the results to be proven.
 
 \begin{thm}\label{pop}
  Let $S\subset\mathbb{Z}$ be a set of size $N>0$ and let $t$ be a positive integer. Suppose that
  \begin{equation*}
   \sum_{x\in\mathbb{Z}}\min(1_S*1_S(x),t)\le (2+\delta)Nt,
  \end{equation*}
  for some $\delta>0$. Then there is an arithmetic progression $P$ with of length at most $(1+2\delta)N+6t$ containing all but at most $\frac{5t}{2}$ points of $S$, provided that $\delta+\frac{5t}{N}\le\frac{1}{4}.$
 \end{thm}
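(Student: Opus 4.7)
Let $L_k = \{x \in \mathbb{Z}: 1_S*1_S(x) \ge k\}$, so the hypothesis reads $\sum_{k=1}^t |L_k| \le (2+\delta) N t$, while $\sum_{k\ge 1} |L_k| = N^2$. The integer form of Pollard's theorem provides the matching lower bound $\sum_{k=1}^t |L_k| \ge t(2N-t)$, with equality precisely for arithmetic progressions, so the hypothesis constrains the \emph{excess} $\sum_{k=1}^t |L_k| - t(2N-t) \le \delta N t + t^2$ to be small. My plan is to interpret this as a quantitative stability version of Pollard's inequality in $\mathbb{Z}$, and then feed the resulting small-doubling information into Freiman's $(3k-4)$ theorem.

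\textbf{Step 1: every $|L_k|$ for $k\le t+1$ is near $2N$.} Applying Pollard for $T=t+1$, $\sum_{k=1}^{t+1}|L_k|\ge (t+1)(2N-t-1)$, and subtracting the hypothesis yields $|L_{t+1}|\ge 2N-\delta Nt-O(t)$. By monotonicity $|L_k|\ge|L_{t+1}|$ for all $k\le t+1$, so every level set up to $k=t+1$ has size close to $2N$. Plugging these lower bounds on $|L_2|,\dots,|L_{t+1}|$ back into $\sum_{k=1}^t|L_k|\ge|L_1|+(t-1)|L_{t+1}|$ and comparing with the hypothesis yields an upper bound on $|S+S|=|L_1|$ close to $2N$; the target is $|S+S|\le(2+2\delta)N+O(t)$, which the assumption $\delta+5t/N\le\tfrac14$ keeps comfortably below $3N-4$.

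\textbf{Step 2: Freiman plus outlier extraction.} Inside the Freiman $(3k-4)$ regime guaranteed by Step~1, Freiman's theorem applied either to $S$ directly, or to a subset $S'\subseteq S$ of size $\ge N-\tfrac{5t}{2}$ obtained by discarding outliers, yields an arithmetic progression $P$ of length $\le|S'+S'|-|S'|+1\le(1+2\delta)N+6t$ containing $S'$. The outliers are those $s\in S$ whose shifts $s+S$ sit disproportionately outside the ``very popular'' set $T:=L_{t+1}$; their number is controlled by the double-count
\[
\sum_{s\in S}|(s+S)\setminus T|=\sum_{y\notin T}1_S*1_S(y)\le(2+\delta)Nt-t|T|,
\]
and by the Step~1 lower bound on $|T|$ the right-hand side is $O(\delta Nt+t)$. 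Picking the threshold for declaring a point an outlier so that at most $\tfrac{5t}{2}$ points qualify, and checking that the remaining ``good'' $S'$ still has the desired small doubling, then delivers the conclusion.

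\textbf{Main obstacle.} The crux is Step~1: the naive monotonicity + Pollard calculation only gives $|S+S|\le 2N+O(\delta Nt^2)$, which is too lossy in $t$ to produce the coefficient $1+2\delta$ in the conclusion. Obtaining the correct $|S+S|\le(2+2\delta)N+O(t)$ will require either a bootstrap that iterates lower bounds on $|L_{t+1}|$ against upper bounds on $|L_1|$, a sharpening through an integer analog of Vosper's stability theorem, or a direct combinatorial analysis of the ``chain'' $s_1+s_1<s_1+s_2<\cdots<s_N+s_N$ weighted by representation counts. A secondary, and purely bookkeeping, hurdle is to thread the precise constants $5t/2$ and $6t$ through the outlier-removal step, which is where the hypothesis $\delta+5t/N\le\tfrac14$ is used in its exact form.
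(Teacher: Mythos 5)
There is a genuine gap, and it is exactly where you flag it: Step~1 is not just ``lossy'', its target statement is false. The hypothesis $\sum_x\min(1_S*1_S(x),t)\le(2+\delta)Nt$ gives no bound of the form $|S+S|\le(2+2\delta)N+O(t)$. The paper's own counterexample makes this concrete: if $S$ is an arithmetic progression of length $N-1$ together with one point far away, then the extra point contributes at most $2N-1$ to the truncated sum, so the hypothesis holds with $\delta=O(1/t)$, yet $|S+S|\approx 3N$; more drastically, with $\lfloor\delta t/2\rfloor$ scattered far points the hypothesis still holds while $|S+S|$ can be of order $\delta tN\gg N$. Quantitatively, your Pollard-at-level-$(t{+}1)$ computation gives $|L_{t+1}|\ge 2N-\delta Nt-O(t^2)$, which is vacuous as soon as $\delta t\gtrsim 1$ (and the theorem allows $t$ up to $N/20$ with $\delta$ of constant size); in fact the only unconditional bound available is $|L_{t+1}|\ge N-(2+\delta)t$, which feeds into your double count to give $\sum_{s\in S}|(s+S)\setminus L_{t+1}|\le(1+\delta)Nt+O(t^2)$ --- enough to bound the number of outliers, but not enough to conclude that the pruned set $S'$ has $|S'+S'|<3|S'|-4$. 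That a priori sumset bound for a suitably pruned set is precisely the content of the theorem, so invoking Freiman's $(3k-4)$ theorem at that point is circular: the whole difficulty is that small \emph{popular} doubling does not control the actual sumset of any explicit large subset without first establishing structure.

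The paper's route avoids this obstruction entirely and is genuinely different from your plan. It first wraps: splitting $S$ into the leftmost $t$ points $A$, the middle $B$, and the rightmost $t$ points $C$, an averaging argument over $(a,c)\in A\times C$ produces two elements such that reducing $S\cap[a,c)$ modulo $n=c-a$ yields a set $S'\subset\mathbb{Z}/n\mathbb{Z}$ with $|S'|\ge N-2t$ and truncated doubling at most $(1+2\delta)Nt+6t^2$, i.e.\ popular doubling below $\tfrac32$. It then proves a coset structure theorem in an arbitrary abelian group for popular doubling below $\tfrac32$ (via the symmetry sets $\{x:1_{S'}*1_{-S'}(x)\ge 2t\}$ and repeated use of the triangle inequality for symmetric differences, in the style of Fournier), and finally unwraps the coset to an arithmetic progression in $\mathbb{Z}$. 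Nothing in your proposal plays the role of the wrapping step or the subgroup-finding lemma, and without some substitute for them the plan as written cannot be completed; the ``bootstrap'', ``integer Vosper stability'', and ``weighted chain'' fixes you mention are left entirely unspecified, and the first of them runs into the same $\delta Nt$ loss each time it appeals to Pollard.
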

 
 \begin{thm}\label{prob}
  Let $A\subset\mathbb{N}$ be a set chosen randomly by picking each element of $\mathbb{N}$ independently with probability $\frac{1}{2}$. Define a sequence $\{p_k\}$ via
  \begin{equation*}
   p_k=2^{k/2}\cdot\mathbb{P}(|\mathbb{N}\setminus (A+A)|\ge k).
  \end{equation*}
  Then the subsequences $\{p_{2k}\}$ and $\{p_{2k+1}\}$ are both increasing and bounded and therefore convergent. In particular $\mathbb{P}(|\mathbb{N}\setminus (A+A)|\ge k)=\Theta(2^{-k/2})$ and the implied constants can only oscillate between $c$ and $c\sqrt{2}$ for some $c>0$ as $k\to\infty$.
 \end{thm}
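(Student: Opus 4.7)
Theorem~\ref{prob} has two substantive parts: the monotonicity $p_{k+2}\ge p_k$ within each parity class, and the uniform upper bound $p_k=O(1)$. I plan to establish the former by a short deterministic shift coupling (which does not need Theorem~\ref{pop}) and the latter by feeding Theorem~\ref{pop} into the Green-Morris regularity scheme.

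For the monotonicity, take $A\subset\mathbb{N}=\{1,2,\ldots\}$ and consider its shift $\tilde A=\{a+1:a\in A\}$. Since $A+A\subseteq\{2,3,\ldots\}$ we have $\tilde A+\tilde A=(A+A)+2\subseteq\{4,5,\ldots\}$, so $\{1,2,3\}\subset\mathbb{N}\setminus(\tilde A+\tilde A)$ while the elements of $\mathbb{N}\setminus(\tilde A+\tilde A)$ that are $\ge 4$ are in bijection with $(\mathbb{N}\setminus(A+A))\setminus\{1\}$ via $n\mapsto n+2$. Counting gives
\begin{equation*}
|\mathbb{N}\setminus(\tilde A+\tilde A)|=|\mathbb{N}\setminus(A+A)|+2.
\end{equation*}
When $A$ is an iid $\mathrm{Ber}(1/2)$ subset of $\mathbb{N}$, the random set $\tilde A$ has the same law as $A$ conditioned on the probability-$\tfrac12$ event $1\notin A$. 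Writing $q_k=\mathbb{P}(|\mathbb{N}\setminus(A+A)|\ge k)$ this yields
\begin{equation*}
q_{k+2}\ge\mathbb{P}\bigl(|\mathbb{N}\setminus(A+A)|\ge k+2,\ 1\notin A\bigr)=\tfrac12\,\mathbb{P}\bigl(|\mathbb{N}\setminus(\tilde A+\tilde A)|\ge k+2\bigr)=\tfrac12 q_k,
\end{equation*}
which after multiplying by $2^{(k+2)/2}$ becomes $p_{k+2}\ge p_k$, so each parity subsequence is increasing.

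For the uniform bound $p_k=O(1)$, the plan is to follow the iterative regularity lemma of Green and Morris but to replace their structural input by Theorem~\ref{pop}. In outline: given that $|\mathbb{N}\setminus(A+A)|\ge k$, one argues that a suitable finite truncation $A\cap[1,N]$ has popular doubling only slightly above the Pollard value $2Nt$; Theorem~\ref{pop} then forces this truncation to lie, up to $O(t)$ outliers, inside an arithmetic progression of length $(1+2\delta)|A\cap[1,N]|+6t$. Summing $2^{-L}$ over the admissible progressions of length $L$, weighted by a modest combinatorial factor for the choice of progression and of the few outliers, and matching $L$ against the forced size of the missing set, produces $q_k=O(2^{-k/2})$. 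Combined with the monotonicity, this proves that both subsequences $\{p_{2k}\}$ and $\{p_{2k+1}\}$ converge and that $\mathbb{P}(|\mathbb{N}\setminus(A+A)|\ge k)=\Theta(2^{-k/2})$.

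The main obstacle is the boundedness step: a naive run of the regularity argument leaves a polylogarithmic loss in $k$, and extracting a genuine $O(1)$ constant requires Theorem~\ref{pop} to be sharp enough to admit $\delta$ as close to $\tfrac14$ as possible while keeping the outlier budget to $O(t)$. By contrast, the shift coupling for monotonicity is essentially mechanical once one notices that shifting $A$ up by $1$ creates exactly two new missing sums at the bottom and preserves the rest.
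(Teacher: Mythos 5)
Your monotonicity argument is correct and coincides with the paper's: conditioning on $1\notin A$ and using the shift identity $|\mathbb{N}\setminus((A+1)+(A+1))|=|\mathbb{N}\setminus(A+A)|+2$ gives $p_{k+2}\ge p_k$ within each parity class. In fact, if you track the difference rather than just the inequality you get the exact identity $p_k-p_{k-2}=2^{k/2}\,\mathbb{P}\bigl(|\mathbb{N}\setminus(A+A)|\ge k\ \text{and}\ 1\in A\bigr)$, and this identity is the key to the half of the theorem you left open.

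The boundedness step as you propose it has a genuine gap. You plan to prove $q_k=O(2^{-k/2})$ directly by applying Theorem~\ref{pop} to a truncation of $A$ and summing $2^{-L}$ over admissible progressions ``weighted by a modest combinatorial factor''. But the combinatorial factors this machinery produces are not modest: the regularity-based counting (Proposition~\ref{counting} here, Green--Morris's Proposition 3.1) inherently loses factors of size $2^{\delta k}$ --- from the exceptional class, the choice of progression, the $O(t)=O(\delta k)$ outliers, and the entropy estimates --- and any unbounded factor, let alone an exponential one, ruins an $O(2^{-k/2})$ bound with an absolute constant. This is exactly why Green and Morris only reach bounds of the shape $(2-\epsilon)^{-k/2}$, and sharpening Theorem~\ref{pop} (your proposed remedy) does not address it. The paper takes a different route: it proves Proposition~\ref{recurrence}, namely $\mathbb{P}\bigl(|\mathbb{N}\setminus(A+A)|\ge k\ \text{and}\ 1\in A\bigr)\le C_0(2+\epsilon_0)^{-k/2}$, where the extra hypothesis $1\in A$ buys an exponential saving (if $1\in A$ then $1+C\subset A+A$ for the middle block $C$ of $A\cap[10k]$, so missing $k$ elements forces an exponentially unlikely configuration of the remaining blocks), and the margin $\epsilon_0$ absorbs all the $2^{\delta k}$ counting losses. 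Feeding this into the increment identity above makes the increments $p_k-p_{k-2}\le C_0\bigl(\tfrac{2}{2+\epsilon_0}\bigr)^{k/2}$ geometrically summable, and boundedness follows by telescoping. So the fix is not a sharper structure theorem but a change of target: bound the increment event $\{|\mathbb{N}\setminus(A+A)|\ge k\}\cap\{1\in A\}$ rather than $q_k$ itself; your own shift coupling is precisely the bridge to that argument, and you stopped one step short of it.
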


 \section{Wrapping argument}

 In this section we start proving Theorem \ref{pop} with similar methods that Lev and Smeliansky used in \cite{lev-95} to prove (a generalisation of) Freiman's $3k-3$ Theorem. More precisely, their first step was to wrap the set $S$ modulo $q:=(\max S-\min S)$ and consider a subset of a finite group instead. Note that since the sets $(S+\min S)$ and $(S+\max S)$ share only one element, this wrapping procedure results in a huge decrement in the doubling constant. In our situation taking just two endpoints would be too careless to achieve good results; luckily we can still find two points near the ends such that wrapping modulo their difference gives us what we need.
 
  \begin{prp}\label{wrap}
  Let $S\subset\mathbb{Z}$ be a finite set and suppose that $N:=|S|>0$. Let $t$ be a positive integer satisfying $2t<N$ and suppose that 
  \begin{equation*}
   \sum_{x\in\mathbb{Z}}\min(1_S*1_S(x),t)\le (2+\delta)Nt
  \end{equation*}
  for some $\delta\ge 0$. Then there exist a positive integer $n$ and an integer $x$ and such that the set $S'=\textup{(}S\cap[x,x+n)\textup{)}\pmod{n}$ (the image of the set $S\cap[x+n)$ under the projection $\pmod{n}$) satisfies the following conditions:
  \begin{itemize}
   \item $|S'|\ge N-2t$,
   \item $\sum_{x\in\mathbb{Z}/n\mathbb{Z}}\min(1_S'*1_S'(x),t)\le \left(1+2\delta+\frac{6t}{N}\right)Nt$.
  \end{itemize}

 \end{prp}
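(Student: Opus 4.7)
Following the wrapping approach of Lev and Smeliansky, I plan to pick indices $1 \le i < j \le N$ with $j - i \ge N - 2t$, set $n = s_j - s_i$, $x = s_i$, $S'' = S \cap [s_i, s_j)$, and let $S' \subset \mathbb{Z}/n\mathbb{Z}$ denote the image of $S''$ under reduction $\pmod{n}$. Since $S''$ lies in a single period of length $n$, its elements remain distinct modulo $n$, so $|S'| = |S''| = j - i \ge N - 2t$, settling the first condition.

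For the second condition, let $f := 1_{S''} * 1_{S''}$, which is supported in $[2s_i, 2s_j - 2]$. For each coset $\bar y \in \mathbb{Z}/n\mathbb{Z}$, let $z_L \in [2s_i, 2s_i + n - 1]$ be its unique representative in this range and set $z_R := z_L + n$; extending $f$ by $0$ outside its support, one has $1_{S'} * 1_{S'}(\bar y) = f(z_L) + f(z_R)$. The identity $\min(a, t) + \min(b, t) - \min(a+b, t) = \max(0, \min(a, t) + \min(b, t) - t)$ then gives
\[ \sum_{\bar y \in \mathbb{Z}/n\mathbb{Z}} \min(1_{S'} * 1_{S'}(\bar y), t) = \sum_z \min(f(z), t) - \mathrm{Sav}(i, j), \]
where $\mathrm{Sav}(i, j) := \sum_{\bar y} \max(0, \min(f(z_L), t) + \min(f(z_R), t) - t) \ge 0$. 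Since $f \le 1_S * 1_S$ pointwise, the hypothesis $\sum_z \min(1_S * 1_S(z), t) \le (2+\delta)Nt$ reduces the proposition to exhibiting a pair $(i, j)$ with $\mathrm{Sav}(i, j) \ge (1 - \delta - 6t/N)Nt$.

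A coset $\bar y$ contributes $t$ to $\mathrm{Sav}(i, j)$ whenever both $f(z_L), f(z_R) \ge t$. Letting $S_t := \{z : f(z) \ge t\}$, inclusion--exclusion in $\mathbb{Z}/n\mathbb{Z}$ bounds the count of such \emph{doubly popular} cosets from below by $|S_t| - n$, giving $\mathrm{Sav}(i, j) \ge t(|S_t| - n)^+$. A Pollard-type estimate in $\mathbb{Z}$ gives $|S_t| \ge 2|S''| - 2t + 1 \ge 2N - 6t + 1$, so the bound becomes $\mathrm{Sav}(i, j) \ge t(2N - 6t + 1 - n)^+$, which exceeds the target $(1 - \delta - 6t/N)Nt$ whenever $n \le (1+\delta)N + 1$. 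Thus the proposition follows once one produces $(i, j)$ with $j - i \ge N - 2t$ and $s_j - s_i \le (1+\delta)N + 1$---equivalently, an interval of length $\le (1+\delta)N + 1$ containing at least $N - 2t + 1$ elements of $S$.

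The main obstacle is that such a compressed interval may fail to exist, most notably when $S$ approximates an arithmetic progression of step $d \ge 2$ and every admissible $n = s_j - s_i$ is at least $(N - 2t)d \gg N$. In this regime, however, $S'$ is forced to lie close to a coset of a proper subgroup $H < \mathbb{Z}/n\mathbb{Z}$, so replacing $n$ by the effective modulus $|H|$ in the inclusion--exclusion argument restores the saving estimate. An averaging over the $2t$ admissible ``minimum-span'' pairs $(i, j) = (k, k + N - 2t)$, $k = 1, \ldots, 2t$, combined with this subgroup refinement, should produce at least one candidate meeting the required bound; the $6t$ correction in $(1 + 2\delta + 6t/N)Nt$ appears to originate from boundary contributions in the Pollard estimate for $|S_t|$ and in the averaging itself.
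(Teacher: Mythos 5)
Your bookkeeping for the wrapped sum is fine (each residue class meets the support of $f=1_{S''}*1_{S''}$ in at most two points, and your saving identity is correct), but the argument has a genuine gap at its central step: the claim that ``a Pollard-type estimate in $\mathbb{Z}$ gives $|S_t|\ge 2|S''|-2t+1$'' for $S_t=\{z:f(z)\ge t\}$. Pollard's theorem bounds $\sum_z\min(f(z),t)$ from below; it gives no lower bound whatsoever on the size of the superlevel set $\{f\ge t\}$, which in general can even be empty (take $S''$ a Sidon set and $t\ge 3$ --- this is exactly the obstruction cited in the introduction as the reason for working with $\sum_x\min(1_S*1_S(x),t)$ rather than popular sums). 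Under the hypothesis $\sum_x\min(1_S*1_S(x),t)\le(2+\delta)Nt$, trivial counting only forces $|S_t|$ to be roughly $N-O(t)$, not $2N-O(t)$; a bound of the latter kind is a structural statement about $S$ of essentially the same strength as the theorem being proved, so it cannot simply be invoked. The second gap is the one you acknowledge yourself: your scheme needs a pair $(i,j)$ with $j-i\ge N-2t$ and $s_j-s_i\le(1+\delta)N+1$, which does not exist when $S$ is (close to) a progression of common difference $d\ge 2$, and the proposed repair --- replacing $n$ by the order of a subgroup $H$ near a coset of which $S'$ lives --- is not carried out; establishing that subgroup structure is precisely the content of Proposition \ref{doubling_3/2} later in the paper, so using it here would be circular. ``Averaging over the minimum-span pairs should produce a candidate'' is a hope, not a proof.

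For contrast, the paper obtains the saving without ever looking at superlevel sets, and with a modulus that may be as large as the diameter of $S$: split $S$ into the $t$ smallest elements $A$, the middle $B$, and the $t$ largest elements $C$; use $1_S*1_S\ge 2(1_A*1_B+1_B*1_C)$ together with the pointwise inequality $s^2\ge t(2s-\min(2s,t))$ to lower-bound $\sum_x(1_A*1_B)^2(x)+\sum_x(1_B*1_C)^2(x)$; rewrite these second moments as $\sum_{a\in A}\sum_{b\in B}1_A*1_B(a+b)+\sum_{c\in C}\sum_{b\in B}1_B*1_C(b+c)$ and average over $a\in A$, $c\in C$ to find specific $a,c$ with $\sum_{x\in(a+B)\cup(c+B)}\min(1_S*1_S(x),t)\ge(2-\delta)Nt-8t^2$. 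Wrapping modulo $n=c-a$ then merges the two translates of $B$, whose wrapped contribution is capped by $t|B|=t(N-2t)$, and the stated bound $(1+2\delta)Nt+6t^2\le(1+2\delta+\frac{6t}{N})Nt$ follows. Some mechanism of this second-moment/averaging type, locating two concrete elements near the two ends, is what your proposal is missing.
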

 
 Let us make the remark that in the final statement the parameter $\delta$ comes with coefficient $2$, which leads to some limitations in the statement of Theorem \ref{pop}, such as $\delta<\frac{1}{4}$. We believe that this argument can be performed in the way that would give the coefficient $1$, which would extend the range of $\delta$ up to $\frac{1}{2}$ and consequently allow us to prove the corresponding statement in a finite group of prime order using similar methods (for that we need to let $\delta>6-4\sqrt{2}>\frac{1}{3}$).
 
 \begin{proof}
  Divide $S$ into three subsets $A,B,C$ with $|A|=|C|=t$, $|B|=N-2t$, $\max A<\min B$, $\max B<\min C$ (intuitively they are the left, middle and right part respectively). Let $f,g,h$ be the corresponding indicator functions (i.e. $f=1_A$, $g=1_B$, $h=1_C$). Substiuting it to the convolution we get
  \begin{equation*}
   1_S*1_S=(f+g+h)*(f+g+h)\ge 2(f*g+g*h),
  \end{equation*}
  where the inequality comes from discarding some of the positive summands. Note that since $\max(A+B)<\max(B+C)$, the functions $f*g$ and $g*h$ are supported on disjoint sets and therefore the above implies the following:
  \begin{multline*}
   \sum_{x\in\mathbb{Z}}\min(2f*g(x),t)+\sum_{x\in\mathbb{Z}}\min(2g*h(x),t)\le\\
   \;e\sum_{x\in\mathbb{Z}}\min(1_S*1_S(x),t)\le(2+\delta)Nt.
  \end{multline*}
  Now we use an easy to check inequality $s^2\ge t(2s-\min(2s,t))$, valid for all real numbers $s$. Speciffically, we substitute $s=f*g(x)$ and $s=g*h(x)$ for all $x\in\mathbb{Z}$ and add them together to get
  \begin{equation*}
   \sum_{x\in\mathbb{Z}}(f*g)^2(x)+\sum_{x\in\mathbb{Z}}(g*h)^2(x)\ge t(4t(N-2t)-(2+\delta)Nt)=(2-\delta)Nt^2-8t^3.
  \end{equation*}
  Here we also used the previous estimate and the formula for the sum $\sum_{x\in\mathbb{Z}}f*g(x)=\sum_{x\in\mathbb{Z}}g*h(x)=t(N-2t)$. Note that since each $x\in\mathbb{Z}$ can be written in exactly $f*g(x)$ ways as a sum $x=a+b$ for $a\in A$, $b\in B$ (and similarly for $g*h$), the above expression is in fact equal to
  \begin{equation*}
   \sum_{x\in\mathbb{Z}}(f*g)^2(x)+\sum_{x\in\mathbb{Z}}(g*h)^2(x)=\sum_{a\in A}\sum_{b\in B} f*g(a+b)+\sum_{c\in C}\sum_{b\in B}g*h(b+c).
  \end{equation*}
  By choosing elements $a\in A$ and $c\in C$ for which the inner sums are above average, we can see that
  \begin{equation*}
   \sum_{b\in B} f*g(a+b)+\sum_{b\in B}g*h(b+c)\ge(2-\delta)Nt-8t^2
  \end{equation*}
  for some $a$ and $c$ (as $|A|=|C|=t$). SInce $f*g$ and $g*h$ are bounded by both $1_S*1_S$ and $t$, we actually proved that
  \begin{equation*}
   \sum_{x\in(a+B)\cup (c+B)}\min(1_S*1_S(x),t)\ge(2-\delta)Nt-8t^2.
  \end{equation*}
  This is just enough for us do define the wrapping procedure. Indeed, projection modulo $(c-a)$ merges the sets $a+B$ and $c+B$ into a single copy of $B$, on which sum of the values of the above function cannot exceed $t|B|=t(N-2t)$. After a short calculation $(2+\delta)Nt-((2-\delta)Nt-8t^2)+t(N-2t)=(1+2\delta)Nt+6t^2$ we see that the set $S'$ for $x=a$ and $n=c-a$ satisfies the inequality
  \begin{equation*}
   \sum_{x\in\mathbb{Z}}\min(1_{S'}*1_{S'}(x),t)\le(1+2\delta)Nt+6t^2.
  \end{equation*}
 \end{proof}

 After proving this our goal is to show that the set $S'$ is close to a coset of a subgroup of $\mathbb{Z}/n\mathbb{Z}$ which would correspond to a progression in $\mathbb{Z}$. We will deal with that problem in the next section.
 
 \section{Popular doubling less than $\frac{3}{2}$}

 The next step of the proof by Lev and Smeliansky was to use Kneser's Theorem stating that for any finite subsets $A,B$ of an abelian group $G$ the subgroup of all elements $h$ satisfying $A+B+h=A+B$ has cardinality at least $|A|+|B|-|A+B|$. The proof of that theorem requires checking a lot of scenarios and it is not clear how one could modify it to work for popular sums. On the contrary, the proof of a weaker statement that if $|A|=|B|=N$ and $|A+B|<\frac{3}{2}N$ then $A+B$ is a coset of a subgroup is much easier and, as it turns out, generalisable to our setting. Specifically, in this section we will proceed towards the following statement.
 
 \begin{prp}\label{doubling_3/2}
  Let $G$ be an abelian group and let $A,B\subset G$ be sets of size $N>0$. Let $t,\eta>0$ be two real numbers satisfying the inequality $\eta+\frac{t}{N}\le\frac{1}{2}$. Moreover suppose that the following inequality holds:
  \begin{equation*}
   \sum_{x\in G} \min(1_A*1_B(x),t)\le (1+\eta)Nt.
  \end{equation*}
  Then there exists a subgroup $H\le G$ and cosets $C_A$ and $C_B$ of $H$ satisfying the following conditions:
  \begin{itemize}
   \item $|H|\le (1+\eta)N$,
   \item $|A\setminus C_A|+|B\setminus C_B|\le t$.
  \end{itemize}
 \end{prp}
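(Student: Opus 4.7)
The plan is to adapt the classical direct proof that $|A+B|<\tfrac{3}{2}N$ implies $A+B$ is a single coset, but with the popular sumset $P=\{x\in G:1_A*1_B(x)\ge t\}$ playing the role of $A+B$. Writing $f=1_A*1_B$ (so $\sum_x f=N^2$), the hypothesis is equivalent to $\sum_x (f-t)_+\ge N^2-(1+\eta)Nt$. Combined with the pointwise bounds $t\mathbf{1}_P\le\min(f,t)$ and $(f-t)_+\le(N-t)\mathbf{1}_P$, this yields
\[
\frac{N(N-(1+\eta)t)}{N-t}\le |P|\le(1+\eta)N,
\]
and under the hypothesis $\eta+t/N\le\tfrac{1}{2}$ both sides are close to $N$, so $P$ behaves almost as a single coset.

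Next I would show that $a+B$ is mostly inside $P$ for most $a\in A$. The identity $\sum_{a\in A}|(a+B)\cap P|=\sum_{x\in P}f(x)\ge N^2-(1+\eta)Nt+t|P|$ feeds into a Markov-type averaging argument that produces a subset $A^\ast\subseteq A$, with $|A\setminus A^\ast|$ controlled by the exceptional budget $t$, such that $|(a+B)\setminus P|\le s$ for every $a\in A^\ast$, where $s$ is an appropriately chosen threshold. A symmetric argument produces $B^\ast\subseteq B$. Then for $a,a'\in A^\ast$ inclusion--exclusion inside $P$ gives
\[
|B\cap(B+a-a')|\ge 2(N-s)-|P|\ge(1-\eta)N-2s,
\]
and the hypothesis $\eta+t/N\le\tfrac{1}{2}$ leaves room to pick $s$ so that this exceeds $N/2$. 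Symmetrically, $|A\cap(A+b-b')|>N/2$ for all $b,b'\in B^\ast$.

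The third step is the closure argument from the classical proof. Set $H_A=\{h:|A\cap(A+h)|>N/2\}$ and $H_B=\{h:|B\cap(B+h)|>N/2\}$, so $A^\ast-A^\ast\subset H_B$ and $B^\ast-B^\ast\subset H_A$. For $h_1,h_2\in H_A$ the three-set bound $|A\cap(A+h_1)\cap(A+h_2)|\ge|A\cap(A+h_1)|+|A\cap(A+h_2)|-|A|>0$ yields some $a$ with $a,a-h_1,a-h_2\in A$; by choosing the $>N/2$ bound with slack larger than $|A\setminus A^\ast|$, one can arrange that all three lie in $A^\ast$, giving $h_1-h_2\in A^\ast-A^\ast\subset H_B$. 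One further iteration gives $H_A=H_B=:H$ and $H-H\subset H$, so $H$ is a subgroup. The upper bound $|H|\le(1+\eta)N$ follows by observing that $A^\ast+B^\ast$ fills the coset $C_A+C_B$ of $H$ (since $|A^\ast|+|B^\ast|>|H|$), and this coset is essentially contained in $P$.

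The main obstacle is the quantitative bookkeeping in the second step. The threshold $s$ must be simultaneously loose enough that $|A\setminus A^\ast|+|B\setminus B^\ast|\le t$ holds by Markov on the average deficit, and tight enough that the $>N/2$ bound survives with slack exceeding $|A\setminus A^\ast|$, so that one can select elements of $A^\ast$ from the three-set intersection in the closure argument. The sharp hypothesis $\eta+t/N\le\tfrac{1}{2}$ is precisely what makes these requirements simultaneously achievable.
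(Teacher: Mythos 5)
Your skeleton runs parallel to the paper's for a while: your set $P=\{x\in G:1_A*1_B(x)\ge t\}$ is the paper's set $D$, and the counting identity $\sum_{x\in P}1_A*1_B(x)\ge t|P|+N^2-(1+\eta)Nt$ together with triangle/inclusion--exclusion inside a set of size at most $(1+\eta)N$ is exactly the mechanism used there. The genuine gap is in the quantitative scheme of your second and third steps, and it is not just bookkeeping. Your closure criterion is the single threshold $N/2$: you need $(1-\eta)N-2s>N/2$, i.e. $s<\frac{(1-2\eta)N}{4}$ (and in fact strictly less, since you also need slack exceeding $|A\setminus A^*|$). On the other hand, the average of $|(a+B)\setminus P|$ over $a\in A$ is at most $\frac{\eta tN}{N-t}$, so Markov gives $|A\setminus A^*|\le\frac{\eta tN^2}{s(N-t)}$, and keeping this below $t/2$ forces $s\ge\frac{2\eta N^2}{N-t}\ge 2\eta N$. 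The two constraints are compatible only when $8\eta N<(1-2\eta)(N-t)$, i.e. roughly $\eta<\frac{1}{10}$. But the hypothesis $\eta+\frac{t}{N}\le\frac{1}{2}$ allows $\eta$ arbitrarily close to $\frac{1}{2}$ when $t/N$ is small, and this is precisely the regime in which the proposition is invoked in the proof of Theorem \ref{pop} (there $\eta\approx 2\delta$ with $\delta$ up to nearly $\frac14$). For such $\eta$ your intersection bound $(1-\eta)N-2s$ barely clears $N/2$ only for $s$ of order $(1-2\eta)N$, and then Markov leaves $|A\setminus A^*|$ of order $\frac{\eta t}{1-2\eta}\gg t$, blowing the budget; so the requirements you describe as simultaneously achievable are in fact contradictory on most of the stated range.

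The paper escapes this by never using the $N/2$ threshold. It sets $H=\{h:1_A*1_{-A}(h)\ge 2t\}$ and proves a bootstrap dichotomy (Lemma \ref{subgroup}): for $h\in H$, averaging the triangle inequality $|B\cap(B+h)|\ge|(B+a)\cap D|+|(B+a+h)\cap D|-|D|$ against an auxiliary function of total mass $2t$ supported on $A\cap(A+h)$ (a Fournier-style weighting) upgrades the weak threshold $2t$ to $1_B*1_{-B}(h)\ge(1-\eta)N$, and symmetrically. Closure then needs only $2(1-\eta)N-N=(1-2\eta)N\ge 2t$, which is literally the hypothesis $\eta+\frac{t}{N}\le\frac12$; no $N/2$ barrier ever appears. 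Likewise, the final bound $|A\setminus C_A|+|B\setminus C_B|\le t$ is not obtained by a Markov selection: after translating $H$ by a maximizer of $1_A*1_B$ to get a coset $C$ of size at most $(1+\eta)N$ capturing more than $\frac{(1+\eta)N^2}{2}$ of the sums, the paper compares $\sum_x\min(1_A*1_B(x),t)$ on and off $C_A+C_B$ using auxiliary functions of prescribed mass, and the hypothesis forces the mass outside to be less than $t$. To rescue your outline you would need to replace the single $N/2$ criterion by such a two-threshold dichotomy (every $h$ has either $1_A*1_{-A}(h)<2t$ or $\ge(1-\eta)N$); as written, the argument does not cover the stated parameter range.
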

 
 Note that in this section we do not require $t$ to be an integer anymore. Let us also remark that we only need the above statement for the case $A=B$, but the proof of the more general case is not much harder so we decided to include it here. For convenience of the reader we split it into several lemmas. First of all we want to find the subgroup $H$. Although the statement of the following lemma appears to be new in the literature, the methods going into the proof were used in \cite{fournier-77}.
 
 \begin{lm}\label{subgroup}
  Let $G$ be an abelian group and let $A,B\subset G$ be sets of size $N>0$. Let $t,\eta>0$ be two real numbers satisfying the inequality $\eta+\frac{t}{N}\le\frac{1}{2}$. Moreover suppose that the following inequality holds:
  \begin{equation*}
   \sum_{x\in G} \min(1_A*1_B(x),t)\le (1+\eta)Nt.
  \end{equation*}
  Then there exists a subgroup $H\le G$ satisfying the following conditions:
  \begin{itemize}
   \item $1_A*1_{-A}(x)\ge(1-\eta)N$ and $1_B*1_{-B}(x)\ge(1-\eta)N$ for all $x\in H$,
   \item $1_A*1_{-A}(x)<2t$ and $1_B*1_{-B}(x)<2t$ for all $x\in G\setminus H$.
  \end{itemize}
 \end{lm}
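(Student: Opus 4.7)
The plan is to take $H := \{x \in G : \alpha(x) \ge 2t\}$ with $\alpha := 1_A * 1_{-A}$ as the candidate subgroup, and to prove in turn: (i) a dichotomy for $\alpha$, that $\alpha(x) \ge 2t$ forces $\alpha(x) \ge (1-\eta)N$; (ii) $H$ is closed under subtraction and so is a subgroup; (iii) the analogous statements for $\beta := 1_B * 1_{-B}$, with the same $H$.

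The heart of the argument will be the dichotomy (i). Assume $x$ has $k := \alpha(x) \ge 2t$ and set $A_x := A \cap (A+x)$, so $|A_x| = k$. Since $A_x \subset A$, the convolution $1_{A_x}*1_B$ is pointwise dominated by $r := 1_A*1_B$, so $\sum_y \min(1_{A_x}*1_B(y), t) \le (1+\eta)Nt$. A matching lower bound is needed to push $k$ into the large regime. In $\mathbb{Z}$ the Pollard inequality $\sum_y\min(1_A*1_B(y),t) \ge t(2N - t)$ applied directly to the pair $(A,B)$ already contradicts the hypothesis under $\eta + t/N \le 1/2$, so the lemma is vacuous there; in a general abelian group Pollard must be replaced by Kneser's theorem, which introduces the stabilizer $H_x := \textup{stab}(A_x + B)$ as a correction term in the lower bound on $\sum_y\min(1_{A_x}*1_B(y),t)$. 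A careful book-keeping in the spirit of \cite{fournier-77} will force $|H_x|$ itself to be of order $(1-\eta)N$, and this subgroup $H_x$ is what will play the role of our $H$.

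Given (i), the closure of $H$ under subtraction (ii) is a short inclusion--exclusion: if $x, y \in H$ then $\alpha(x), \alpha(y) \ge (1-\eta)N$, so at most $2\eta N$ elements of $A$ fail to lie in both $A+x$ and $A+y$; consequently $|A \cap (A+x) \cap (A+y)| \ge (1-2\eta)N$, and shifting this intersection by $-y$ gives $\alpha(x-y) \ge (1-2\eta)N$. The hypothesis $\eta + t/N \le 1/2$ guarantees $(1-2\eta)N \ge 2t$, so $x - y \in H$ and the dichotomy upgrades this to $\alpha(x-y) \ge (1-\eta)N$.

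For (iii), the hypothesis is symmetric in $A$ and $B$, so the whole argument can be rerun with $\beta$ in place of $\alpha$, producing an analogous subgroup $H_B := \{x : \beta(x) \ge 2t\}$ with its own dichotomy. I expect the identification $H = H_B$ to follow because both sets coincide with the stabilizer of the popular-sum set $P := \{y : r(y) \ge t\}$, an object visibly intrinsic to the unordered pair $(A,B)$; alternatively, one can argue directly from $\alpha(x) \ge (1-\eta)N$ that $|r(y) - r(y-x)| \le 2\eta N$ for every $y$, which constrains the distribution of $B$ enough to make $\beta(x)$ large. The main obstacle throughout is executing step (i) cleanly in a general abelian group, where Kneser's theorem has to be applied with some care to track subgroup sizes.
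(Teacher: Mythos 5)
There is a genuine gap, and it sits exactly at what you yourself call the heart of the argument. Your step (i) is not a proof but a deferral to an unproven ingredient: a Pollard-type (truncated/popular-sums) version of Kneser's theorem that would bound $\sum_y\min(1_{A_x}*1_B(y),t)$ from below with a stabilizer correction. No such statement is established in your proposal, and it is not a known off-the-shelf result; indeed the paper is explicit that it is ``not clear how one could modify'' Kneser's theorem to work for popular sums, and this difficulty is precisely why the paper settles for a $\frac{3}{2}$-type statement rather than a full Kneser analogue. Saying that ``careful book-keeping in the spirit of Fournier will force $|H_x|$ to be of order $(1-\eta)N$'' does not discharge this: you would still have to produce the truncated Kneser inequality, show the stabilizer is large, show it does not depend on $x$, and reconcile it with your candidate $H=\{x:1_A*1_{-A}(x)\ge 2t\}$. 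Likewise your step (iii) only ``expects'' the identification $H=H_B$; nothing is proved there either, and the fallback remark that $|1_A*1_B(y)-1_A*1_B(y-x)|\le 2\eta N$ ``constrains the distribution of $B$ enough'' is again an unproven leap. Your step (ii), the closure of $H$ under subtraction via the triangle inequality and $(1-2\eta)N\ge 2t$, is correct and coincides with the paper's argument, but it is the easy part.

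For comparison, the paper avoids Kneser entirely and proves the dichotomy by an elementary averaging argument with the popular-sum set $D=\{y:1_A*1_B(y)\ge t\}$: the hypothesis shows $D$ captures at least $t|D|+N^2-(1+\eta)Nt$ of the pairs $(a,b)$, and then, for $h$ with $1_A*1_{-A}(h)\ge 2t$, the triangle inequality $|B\cap(B+h)|\ge |(B+a)\cap D|+|(B+a+h)\cap D|-|D|$, averaged against a weight of total mass $2t$ supported on $A\cap(A+h)$, yields $1_B*1_{-B}(h)\ge(1-\eta)N$. Crucially this is a \emph{cross} implication ($1_A*1_{-A}(h)\ge 2t$ forces $1_B*1_{-B}(h)$ large, and symmetrically with $A$ and $B$ swapped), so combining the two gives a single set $H$ satisfying both bullet points simultaneously and removes the need for your separate identification of $H$ with $H_B$. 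If you want to rescue your outline, replace step (i) by an argument of this kind (or actually prove the truncated Kneser statement you invoke, which would be a substantial result in its own right); as written, the proposal does not prove the lemma.
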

 
 Before we proceed, let us observe that the triangle inequality for symmetric difference of sets $|V\triangle W|\le |U\triangle V|+|U\triangle W|$ can be rearranged as $|U\cap V|+|U\cap W|\le |U|+|V\cap W|$. We will frequently use a variant of this inequality, namely the assertion that $|U\cap (V-v)|+|U\cap (W-w)|\le |U|+|V\cap (W-w+v)|$ for various choices of $v,w$. We will refer to all kinds of this statement as triangle inequality.
 
 \begin{proof}
  Let us start with the set $D=\{x\in G:1_A*1_B(x)\ge t\}$. Note that $D$ contains most of the sums $a+b$; more precisely we have
  \begin{multline*}
   \#\{(a,b)\in A\times B:a+b\in D\}=\sum_{x\in D}1_A*1_B(x)=\\
   =\sum_{x\in D}t+\sum_{x\in G}\max(1_A*1_B(x)-t,0)=\\
   =t|D|+\sum_{x\in G}1_A*1_B(x)-\sum_{x\in G}\min(1_A*1_B(x),t)\ge t|D|+N^2-(1+\eta)Nt.
  \end{multline*}
  Now let $H=\{x\in G:1_A*1_{-A}(x)\ge 2t\}$. We would like to show that for any $h\in H$ we have $1_B*1_{-B}(x)\ge(1-\eta)N$. Let us start by noticing that for any $a\in |A\cap (A+h)|$ we have
  \begin{equation*}
   |B\cap(B+h)|\ge |(B+a)\cap D|+|(B+a+h)\cap D|-|D|
  \end{equation*}
  by triangle inequality. Now let $f:G\to [0,1]$ be an auxiliary function supported on $A\cap(A+h)$ and satisfying the condition $\sum_{x\in G}f(x)=2t$ (there exists one by choice of $h$). Multiplying the above inequality by $f(a)$ and adding them together we get
  \begin{equation*}
   2t|B\cap(B+h)|\ge\sum_{a\in A} (f(a)+f(a-h))|(B+a)\cap D|-2t|D|. 
  \end{equation*}
  Now we combine the inequalities
  \begin{align*}
   \sum_{a\in A} 2|(B+a)\cap D|&\ge2t|D|+2N^2-2(1+\eta)Nt, \\
   \sum_{a\in A} (2-f(a)-f(a-h))|(B+a)\cap D|&\le|B|\sum_{a\in A}(2-f(a)-f(a-h))=\\=N(2N-4t)
  \end{align*}
  to get
  \begin{multline*}
   |B\cap(B+h)|\ge\frac{2t|D|+2N^2-2(1+\eta)Nt-(2N^2-4Nt)-2t|D|}{2t}=\\
   =(1-\eta)N.
  \end{multline*}

  In similar way we can prove that the inequality $1_B*1_{-B}(x)\ge 2t$ implies $1_A*1_{-A}\ge(1-\eta)N$. Since $(1-\eta)N\ge 2t$, we have just constructed the set $H$ satisfying all the postulated inequalities. The only thing remaining is to show that $H$ is a subgroup. This follows from triangle inequality: if $h_1,h_2\in H$, then
  \begin{multline*}
   |A\cap(A+h_1-h_2)|\ge |A\cap(A+h_1)|+|A\cap(A+h_2)|-|A|\ge\\
   \ge 2(1-\eta)N-N=(1-2\eta)N\ge 2t
  \end{multline*}
  and consequently $h_1-h_2\in H$.
 \end{proof}

 Let us now turn for the moment to some estimates of the expressions of the form $\sum_{x\in G}\min(F(x),t)$.
 
 \begin{lm}
  Let $G$ be an abelian group and let $F:G\to [0,M]$ be a function satisfying $\sum_{x\in G}F(x)<\infty$. Then for any $t\in [0,M]$ we have
  \begin{equation*}
   \sum_{x\in G}\min(F(x),t)\ge\frac{t}{M}\sum_{x\in G}F(x).
  \end{equation*}
 \end{lm}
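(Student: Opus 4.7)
The plan is to reduce the inequality to a pointwise statement: I will show that for every $x\in G$, the bound
\begin{equation*}
 \min(F(x),t)\ge\frac{t}{M}F(x)
\end{equation*}
holds, and then sum over $x\in G$.

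To verify the pointwise bound, I split into two cases according to the value of $F(x)$. If $F(x)\le t$, then $\min(F(x),t)=F(x)$, and the desired inequality becomes $F(x)\ge\frac{t}{M}F(x)$, which is immediate from $t\le M$ (and $F(x)\ge 0$). If instead $F(x)>t$, then $\min(F(x),t)=t$, and we need $t\ge\frac{t}{M}F(x)$, equivalently $F(x)\le M$, which holds by the assumption $F:G\to[0,M]$.

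Since the inequality holds term by term and all the series involved are absolutely convergent (as $F\ge 0$ and $\sum_x F(x)<\infty$, which also forces $\sum_x\min(F(x),t)<\infty$), summation over $x\in G$ gives the stated conclusion. There is no serious obstacle in this argument; the only thing to keep in mind is that the hypothesis $t\le M$ is genuinely used in the first case, and $F(x)\le M$ in the second, so both halves of the range of $F$ are controlled by the single normalising constant $M$.
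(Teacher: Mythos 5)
Your proof is correct and follows the same route as the paper: establish the pointwise bound $\frac{t}{M}F(x)\le\min(F(x),t)$ and sum over $x\in G$. The case analysis you spell out is exactly the justification left implicit in the paper's one-line proof.
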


 \begin{proof}
  Notice that $\frac{t}{M}F(x)\le\min(t,F(x))$ for each individual $x$.
 \end{proof}

 \begin{cor}
   Let $G$ be an abelian group and let $F:G\to [0,+\infty)$ be a function satisfying $\sum_{x\in G}F(x)<\infty$. Then for any $t'>t>0$ we have
  \begin{equation*}
   \sum_{x\in G}\min(F(x),t)\ge\frac{t}{t'}\sum_{x\in G}\min(F(x),t').
  \end{equation*}
 \end{cor}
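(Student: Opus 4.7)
The plan is to deduce this corollary directly from the preceding lemma by applying it to a truncated version of $F$ rather than to $F$ itself. The key observation is that $F$ in the corollary is not assumed to be bounded, which blocks a direct application of the lemma with $M=t'$, but replacing $F$ by $F'(x):=\min(F(x),t')$ yields a function bounded by $M=t'$ to which the previous lemma does apply.

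More precisely, I would set $F'(x)=\min(F(x),t')$, which takes values in $[0,t']$ and satisfies $\sum_{x\in G} F'(x)\le\sum_{x\in G}F(x)<\infty$. Invoking the preceding lemma with the bound $M=t'$ and with the threshold $t\in[0,t']$ gives
\begin{equation*}
 \sum_{x\in G}\min(F'(x),t)\ge\frac{t}{t'}\sum_{x\in G}F'(x)=\frac{t}{t'}\sum_{x\in G}\min(F(x),t').
\end{equation*}
Since $t<t'$, the identity $\min(\min(F(x),t'),t)=\min(F(x),t)$ holds pointwise, so the left-hand side coincides with $\sum_{x\in G}\min(F(x),t)$, yielding the claim.

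There is essentially no obstacle: the only thing to check is the trivial pointwise identity for nested minima, and the integrability of $F'$ which follows from that of $F$. The proof is a single line once the truncation is introduced.
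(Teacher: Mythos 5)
Your proof is correct and matches the paper's argument exactly: the paper also deduces the corollary by applying the preceding lemma to the truncated function $\min(F(x),t')$ with $M=t'$. Nothing further is needed.
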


 \begin{proof}
  Just use the above lemma for $\min(F(x),t')$.
 \end{proof}

 \begin{cor}
  Let $G$ be an abelian group and let $A,B\subset G$ be sets of size $N>0$. Let $t,\eta>0$ be two real numbers satisfying the inequality $\eta+\frac{t}{N}\le\frac{1}{2}$. Moreover suppose that the following inequality holds:
  \begin{equation*}
   \sum_{x\in G} \min(1_A*1_B(x),t)\le (1+\eta)Nt.
  \end{equation*}
  Then for any $t'>t$ we have
  \begin{equation*}
   \sum_{x\in G} \min(1_A*1_B(x),t')\le (1+\eta)Nt'.
  \end{equation*}
 \end{cor}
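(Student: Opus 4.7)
The plan is a one-step reduction to the preceding corollary, which already supplies the monotonicity we need. Set $F(x)=1_A*1_B(x)$; this is a nonnegative function on $G$ with $\sum_{x\in G}F(x)=N^2<\infty$, so the preceding corollary applies to $F$ at any pair of thresholds.

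Applying that corollary to the pair $t<t'$ gives
\begin{equation*}
\sum_{x\in G}\min(F(x),t)\ \ge\ \frac{t}{t'}\sum_{x\in G}\min(F(x),t').
\end{equation*}
Multiplying through by $t'/t$ and feeding in the hypothesis $\sum_{x}\min(F(x),t)\le(1+\eta)Nt$ immediately produces
\begin{equation*}
\sum_{x\in G}\min(F(x),t')\ \le\ \frac{t'}{t}\cdot(1+\eta)Nt\ =\ (1+\eta)Nt',
\end{equation*}
which is the conclusion. No further work is required.

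I do not expect any obstacle: the constraints $\eta+t/N\le 1/2$ and the hypotheses on $A$ and $B$ play no role at this step beyond guaranteeing that the input inequality is meaningful, so the proof is literally one invocation of the preceding corollary. The intuition is that the map $t\mapsto \tfrac{1}{t}\sum_{x}\min(F(x),t)$ is non-increasing in $t$ for any nonnegative summable $F$ (each individual term $\min(F(x),t)/t$ is non-increasing), so any upper bound valid at $t$ automatically propagates to all larger thresholds $t'$ with the same normalisation.
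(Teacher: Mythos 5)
Your proof is correct and is exactly the argument the paper intends: the corollary is stated without proof precisely because it follows by applying the preceding corollary to $F=1_A*1_B$ (nonnegative, with $\sum_x F(x)=N^2<\infty$) and multiplying the hypothesis through by $t'/t$, just as you do.
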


 Let us go back to our considerations. We have already constructed a set $D$ containing most of the sums $a+b$ and the subgroup $H$ satisfying certain inequalities. Now it is time to construct a \emph{coset} $C$ of $H$ containing most of the sums $a+b$. We will do it in two steps: first we show that $C$ contains just enough sums to perform calculations quite accurately, which in turn will give us the cosets $C_A$ and $C_B$ with desired properties.
 
 \begin{lm}
  Let $G$ be an abelian group and let $A,B\subset G$ be sets of size $N>0$. Let $t,\eta>0$ be two real numbers satisfying the inequality $\eta+\frac{t}{N}\le\frac{1}{2}$. Moreover suppose that the following inequality holds:
  \begin{equation*}
   \sum_{x\in G} \min(1_A*1_B(x),t)\le (1+\eta)Nt.
  \end{equation*}
  Then there exists a coset $C$ of a subgroup $H$ satisfying $|C|\le (1+\eta)N$ and 
  \begin{equation*}
   \#\{(a,b)\in A\times B:a+b\in C\}>\frac{(1+\eta)N^2}{2}.
  \end{equation*}
 \end{lm}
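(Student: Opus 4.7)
The plan is to take $H$ to be the subgroup from Lemma \ref{subgroup} and to choose $C$ as the $H$-coset containing a maximizer $x_0$ of $1_A * 1_B$. First I would record the almost-periodicity of the convolution: for $h \in H$ the condition $1_A * 1_{-A}(h) \ge (1-\eta)N$ forces $|A \triangle (A+h)| \le 2\eta N$, hence
\begin{equation*}
 |1_A * 1_B(x+h) - 1_A * 1_B(x)| \le 2\eta N \qquad \text{for all } x \in G \text{ and } h \in H,
\end{equation*}
so $1_A * 1_B$ fluctuates by at most $2\eta N$ on each coset of $H$.

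Setting $D := \{x : 1_A * 1_B(x) \ge t\}$, the popular doubling immediately gives $|D| \le (1+\eta)N$ and the dual lower bound $\sum_{x \in D} 1_A * 1_B(x) \ge N^2 - (1+\eta)Nt + t|D|$, while the first lemma of this section (applied to $F = 1_A * 1_B$ with $M = \mu := \max_x 1_A * 1_B(x)$) gives $\mu \ge N/(1+\eta)$. The crucial step is to sharpen this to $\mu \ge t + 2\eta N$: arguing by contradiction, if $1_A * 1_B < t + 2\eta N$ everywhere then $\min(1_A * 1_B(x), t) \ge 1_A * 1_B(x) - 2\eta N \cdot 1_D(x)$ pointwise, and summing together with the popular doubling yields $(1+\eta)Nt \ge N^2 - 2\eta N |D|$; combined with $|D| \le (1+\eta)N$ and $\eta + t/N \le 1/2$ this forces a contradiction.

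With $\mu \ge t + 2\eta N$ in hand, the almost-periodicity applied at $x_0$ gives $1_A * 1_B(x) \ge \mu - 2\eta N \ge t$ for every $x \in C := x_0 + H$, so $C \subseteq D$, and hence $|C| = |H| \le |D| \le (1+\eta)N$. To finish the proof I would show that $D \subseteq C$ as well: any $x_1 \in D \setminus C$ would lie in a coset of $H$ disjoint from $C$, and applying the same reasoning at a maximizer on that coset would produce a second coset entirely contained in $D$, which together with $C$ contradicts $|D| \le (1+\eta)N$ after accounting for the popular doubling. Once $D \subseteq C$ is established,
\begin{equation*}
 \#\{(a,b) : a+b \in C\} = \sum_{x \in D} 1_A * 1_B(x) \ge N^2 - (1+\eta)Nt > \frac{(1+\eta)N^2}{2},
\end{equation*}
the last inequality being equivalent to $2\eta^2 > 0$ after substituting $t/N \le 1/2 - \eta$.

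The main obstacle is the last step, $D \subseteq C$. Almost-periodicity quickly yields that any coset meeting $\{1_A*1_B \ge t + 2\eta N\}$ sits inside $D$, but ruling out a \emph{second} such coset requires combining the popular doubling bound with the structural information from Lemma \ref{subgroup} in a rather delicate way, and the hypothesis $\eta + t/N \le 1/2$ appears to be essentially tight for this step to close.
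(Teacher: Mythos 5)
Your opening moves (taking $H$ from Lemma \ref{subgroup}, $C=x_0+H$ with $x_0$ a maximizer, the almost-periodicity bound $|1_A*1_B(x+h)-1_A*1_B(x)|\le 2\eta N$, and the counts $|D|\le(1+\eta)N$, $\sum_{x\in D}1_A*1_B(x)\ge N^2-(1+\eta)Nt+t|D|$) are all correct and close in spirit to the paper. But the proof does not close, and the step you yourself flag as the obstacle is a genuine gap, not a technicality. Showing $D\subseteq C$ is essentially as strong as Proposition \ref{doubling_3/2} itself: in the paper it is only available \emph{a posteriori} (once $A,B$ are known to sit in cosets $C_A,C_B$ up to $t$ points, every $x\notin C_A+C_B$ has $1_A*1_B(x)\le|A\setminus C_A|+|B\setminus C_B|<t$), so assuming it here is circular. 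Your sketched repair fails quantitatively: if $x_1\in D\setminus C$, almost-periodicity only gives $1_A*1_B\ge t-2\eta N$ on the coset $x_1+H$, which is vacuous, so no ``second coset entirely contained in $D$'' is produced and no contradiction with $|D|\le(1+\eta)N$ follows. There is a second, independent problem: the ``crucial'' claim $\mu\ge t+2\eta N$ is not available in the full range of the hypotheses. Your contradiction needs $(1+\eta)\frac{t}{N}<1-2\eta(1+\eta)$, which already fails for $\eta\gtrsim 0.28$ when $t/N$ is near $\frac12-\eta$, and for $\eta$ close to $\frac12$ the claim itself is false: take $A=B$ a random subset of density $\frac{1}{1+\eta'}$ (with $\eta'<\eta$) of a subgroup $K$ of order $(1+\eta')N$ in a cyclic group of odd order; then $\sum_x\min(1_A*1_B(x),t)\approx(1+\eta')Nt\le(1+\eta)Nt$ while $\max_x 1_A*1_B(x)\approx\frac{N}{1+\eta'}$, which is smaller than $t+2\eta N$ once $\eta$ is large and $t/N$ small. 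Since your bound $|C|\le(1+\eta)N$ is routed through $C\subseteq D$, which relies on this claim, that part of the argument also needs repair (the paper instead bounds $|C|$ via $C\subseteq\{x:1_A*1_B(x)\ge(1-\eta)N-k\}$ and the monotonicity corollary).

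The paper's proof avoids your obstacle entirely: with $k=N-1_A*1_B(x_0)$ (so $k\le\frac{\eta N}{1+\eta}$), the triangle inequality $|1_A*1_B(x+x_0)-1_A*1_{-A}(x)|\le k$ shows that the smaller superlevel set $C'=\{x:1_A*1_B(x)\ge 2t+k\}$ is automatically contained in $C$, so one never needs to control level-$t$ points outside $C$. One then lower-bounds $\#\{(a,b):a+b\in C'\}\ge N^2-(1+\eta)N(2t+k)+(2t+k)|C'|$ using the monotonicity corollary at threshold $t'=2t+k$, lower-bounds $|C'|$ from the total mass $N^2$, and checks (after substituting the worst case $t=(\frac12-\eta)N$) that the result exceeds $\frac{(1+\eta)N^2}{2}$. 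If you want to salvage your approach, replace $D$ by such a higher-threshold set that is provably inside $C$; trying to prove $D\subseteq C$ directly at this stage is not a viable route.
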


 \begin{proof}
  Let $H$ be the subgroup constructed in Lemma \ref{subgroup}. We want to translate $H$ to make it contain most of the sums $a+b$, so a reasonable choice is to take $x_0\in G$ for which $1_A*1_B(x_0)$ is maximal and set $C=H+x_0$. Let $k=N-1_A*1_B(x_0)$. Note that by previous considerations for any $t'<\frac{N}{1+\eta}$ we have $\sum_{x\in G} \min(1_A*1_B(x),t')<N^2=\sum_{x\in G}1_A*1_B(x)$, which implies $1_A*1_B(x_0)\ge\frac{N}{1+\eta}$ or in other words $k\le\frac{\eta N}{1+\eta}$. Moreover, by triangle inequality we have $|1_A*1_B(x+x_0)-1_A*1_{-A}(x)|\le k$; in particular $C$ contains the set $C'=\{x\in G\ : 1_A*1_B(x)\ge 2t+k\}$. Therefore we are interested in the size of the set $\#\{(a,b)\in A\times B:a+b\in C'\}$. By the same calculations as for the set $D$ in the proof of the Lemma \ref{subgroup} we know that
  \begin{equation*}
   \#\{(a,b)\in A\times B:a+b\in C'\}\ge N^2-(1+\eta)N(2t+k)+(2t+k)|C'|.
  \end{equation*}
  Here we have also used here the previous corollary with $t'=2t+k$. Let us bound the size of $C'$ from below. We know that
  \begin{multline*}
   N^2=\sum_{x\in G}1_A*1_B(x)=\\
   =\sum_{x\in G}\min(1_A*1_B(x),2t+k)+\sum_{x\in C'}(1_A*1_B(x)-(2t+k))\le\\
   \le (1+\eta)N(2t+k)+|C'|(N-2t-2k),
  \end{multline*}
  which rearranges to $|C'|\ge\frac{N(N-(1+\eta)(2t+k))}{N-2t-2k}.$ Substituting this into the previous bound we get
  \begin{equation*}
   \#\{(a,b)\in A\times B:a+b\in C'\}\ge N^2-\frac{N(2t+k)(\eta N-(1+\eta)k)}{N-2t-2k}.
  \end{equation*}
  It is easy to check that the expression on the right hand side is a decreasing function in $t$, so we can substitute $t=(\frac{1}{2}-\eta)N$ to get
  \begin{equation*}
   \#\{(a,b)\in A\times B:a+b\in C'\}\ge N^2-\frac{N((1-2\eta)N+k)(\eta N-(1+\eta)k)}{2(\eta N-k)}.
  \end{equation*}
  Now it is also easy to check that this being greater than $\frac{1+\eta}{2}N^2$ is equivalent to the inequality $(\eta N-k)^2+(1-2\eta)\eta Nk+\eta k^2>0$, which is true because we assumed $\eta>0$.
  
  Now we only need to bound the size of $C$; because of triangle inequality it is contained in the set $C''=\{x\in G:1_A*1_B(x)\ge (1-\eta)N-k\}$, so using the corollary with $t'=(1-\eta)N-k>t$ we get
  \begin{multline*}
   t'|C|\le t'|C''|=\sum_{x\in C''}\min(1_A*1_B(x),t')\le\\
   \le\sum_{x\in G}\min(1_A*1_B(x),t')\le (1+\eta)Nt'.
  \end{multline*}
  Thus we have proved both desired inequalities.
 \end{proof}

 Now we are ready to prove Proposition \ref{doubling_3/2}.
 
 \begin{proof}[Proof of Proposition \ref{doubling_3/2}]
  Let $H$ and $C$ be as in previous considerations. By averaging argument, there exist $a\in A$, $b\in B$ with $|(A+b)\cap C|>\frac{1+\eta}{2}N$ and $|(B+a)\cap C|>\frac{1+\eta}{2}$. Define $C_A=C-b$ and $C_B=C-b$. Now let us estimate the sum of the expressions $\min(1_A*1_B(x),t)$ separately on and outside $C_A+C_B$. To do this, let $f,g,h:G\to[0,1]$ be auxiliary functions supported on $C_B$, $G\setminus C_B$ and $G\setminus C_A$ respectively, satisfying $f,g\le 1_B$, $h\le 1_A$ and the conditions
  \begin{align*}
   \sum_{x\in G}f(x)&=t,\\
   \sum_{x\in G}g(x)&=\min(|B\setminus C_B|,t),\\
   \sum_{x\in G}h(x)&=\min\left(|A\setminus C_A|,t-\sum_{x\in G}g(x)\right).
  \end{align*}
  Now we have the following estimates:
  \begin{align*}
   \sum_{x\in C_A+C_B}\min(1_A*1_B(x),t)&\ge\sum_{x\in C_A+C_B}1_{A\cap C_A}*f(x)>\frac{(1+\eta)Nt}2,\\
   \sum_{x\not\in C_A+C_B}\min(1_A*1_B(x),t)&\ge\sum_{x\not\in C_A+C_B}(1_{A\cap C_A}*g(x)+1_{B\cap C_B}*h(x))>\\
   &>\frac{(1+\eta)N}{2}\sum_{x\in G}(g(x)+h(x)).
  \end{align*}
 Comparing that to the initial estimate $\sum_{x\in G}\min(1_A*1_B(x),t)\le(1+\eta)Nt$ we see that $\sum_{x\in G}(g(x)+h(x))<t$, which is only possible if $\sum_{x\in G}g(x)=|B\setminus C_B|$ and $\sum_{x\in G}h(x)=|A\setminus C_A|$.
 Therefore $|A\setminus C_A|+|B\setminus C_B|<t$.
 
 To finish the proof, notice that $|H|=|C|\le(1+\eta)N$.
 \end{proof}

 Before proceeding, let us make the remark that in fact the larger the subgroup $H$ is, the less points of $A$ and $B$ are allowed to lie outside $C_A$ and $C_B$ respectively. One can try to perform even more precise calculations, using the fact that now we know that actually for all $x\in C=C_A+C_B$ we have $1_A*1_B(x)\ge t$. However we do not need it that much so we leave the result as it is.
 
 \section{Completing the proof}
 
 Having proved the results in the previous two sections, we are ready to prove Theorem \ref{pop}.
 
 \begin{proof}[Proof of Theorem \ref{pop}]
  Suppose that we have the set $S$ that for which the inequality $\sum_{x\in \mathbb{Z}}\min(1_S*1_S(x),t)\le (2+\delta)Nt$ holds. We use Lemma \ref{wrap} to obtain a set $S'\subset\mathbb{Z}/n\mathbb{Z}$ of size at least $N-2t$ satisfying the inequality $\sum_{x\in \mathbb{Z}}\min(1_{S'}*1_{S'}(x),t)\le (1+2\delta)Nt+6t^2$. We see that the assumptions of Proposition \ref{doubling_3/2} are satisfied with $A=B=S'$ as long as
  \begin{equation*}
   \frac{1}{2}\ge\frac{(1+2\delta)N+6t-|S'|}{|S'|}+\frac{t}{|S'|}=\frac{(1+2\delta)N+7t-|S'|}{|S'|},
  \end{equation*}
  in other words $3|S'|\ge (2+4\delta)N+14t$, which is certainly true if $\delta+\frac{5t}{N}\le\frac{1}{4}$.
  Propsition \ref{doubling_3/2} then tells us that the set $S'$ is essentially contained in a coset of a subgroup of size at most $(1+2\delta)N+6t$, with the exception of at most $\frac{t}{2}$ points. Unwrapping the situation back again, we see that the set $S$ has all but at most $\frac{5t}{2}$ elements contained in an arithmetic progression of length at most $(1+2\delta)N+6t$.
 \end{proof}

 \section{Regularity and counting sets with small sumset}
 
 This section is devoted to a lemma of Green and Morris on counting subsets of a cyclic group of prime order satisfying certain bounds on the size of the subset. Unfortunately we cannon just quote their result, as we need a slight modification of it. Therefore we need to move back to the statement of the regularity lemma, or more precisely, to \cite[Theorem 2.1]{green-15}, stated below.
 
 \begin{lm}[Green-Morris, regularity lemma]
  For every $\epsilon>0$, there exists $\delta=\delta(\epsilon)>0$ such that the following is true. Let $p>p_0(\epsilon)$ be a sufficiently large prime and let $A\subset\mathbb{Z}/p\mathbb{Z}$ be a set. There is a dilate $A^*=\lambda A$ and a prime $q$, $\frac{1}{\epsilon^{10}}\le q\le p^{1-\delta}$, such that the following holds. If
  $A^*_i=A^*\cap I_i(q)$ for each $i\in\mathbb{Z}/q\mathbb{Z}$ then, for at least $(1-\epsilon)q^2$ pairs $(i,j)\in(\mathbb{Z}/q\mathbb{Z})^2$
  \begin{equation*}
   \min(|A_i^*|,|A_j^*|)\le\epsilon p/q \qquad\text{or}\qquad|A_i^*+A_j^*|\ge(2-\epsilon)p/q.
  \end{equation*}
 \end{lm}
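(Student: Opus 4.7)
The statement is quoted verbatim from Green and Morris, so the plan is simply to cite \cite[Theorem 2.1]{green-15} and invoke it as a black box. No original proof is needed here; the point of restating it is to set up notation and to prepare the way for the modification that the next section actually requires. I would therefore present this lemma with a one-line proof that defers entirely to the source.

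If one did want to reconstruct the argument, the natural strategy is an iterative density-increment of the kind standard in Fourier-analytic additive combinatorics. The plan would be to fix $\epsilon$, choose a starting scale $q_0$ (something like $\epsilon^{-10}$), and ask whether the partition of $\mathbb{Z}/p\mathbb{Z}$ induced by the fibres $I_i(q_0)$ already satisfies the stated dichotomy for $A$ itself, after possibly dilating. If it does not, then many pairs $(i,j)$ have both $|A_i|,|A_j|$ not too small and $|A_i+A_j|<(2-\epsilon)p/q_0$; on such pairs Pollard's theorem (or a Freiman-type statement) forces $A_i$ and $A_j$ to inherit nontrivial arithmetic structure, which can be exploited to find a dilation $\lambda$ and a refined scale $q_1$ at which the $\ell^2$ energy of the fibres of $\lambda A$ strictly increases by a quantity depending on $\epsilon$. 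Since this energy is bounded by $1$, the process must halt after a bounded number of rounds, producing the required dilate $A^*$ and prime $q$.

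The main obstacle in actually executing this is controlling two compounding quantities: first, ensuring that at each step one can find a \emph{prime} $q_{k+1}$ in a prescribed range (invoking Bertrand's postulate or quantitative prime-counting to keep each $q_k$ prime), and second, tracking the dilation factor $\lambda$ so that after a bounded number of iterations the final scale $q$ still lies below $p^{1-\delta}$. This is precisely the reason the lower bound $q \ge \epsilon^{-10}$ appears: it gives slack to absorb the polynomial blow-ups in the energy-increment argument. Since Green and Morris have already carried this out carefully, I would rely on their proof rather than repeat it.
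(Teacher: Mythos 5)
Your approach matches the paper exactly: this lemma is an external result, quoted verbatim from \cite[Theorem 2.1]{green-15} purely to fix notation, and the paper likewise offers no proof of its own but defers entirely to Green and Morris. Citing it as a black box is all that is required here.
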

 Here we adopted the notation $I_i(q)=\{x\in\mathbb{Z}/p\mathbb{Z}:x/p\in[i/q,(i+1)/q)\}$. Note that $[i/q,(i+1)/q)+[j/q,(j+1)/q)\subset [(i+j)/q,(i+j+2)/q)$ as subsets of $\mathbb{R}/\mathbb{Z}$; intersecting those sets with $\mathbb{Z}/q\mathbb{Z}$ embedded in $\mathbb{R}/\mathbb{Z}$ in a natural way, we get the inclusion $I_i+I_j\subset I_{i+j}\cup I_{i+j+1}$. 
 
 It is now time to prove some bounds on the number of sets having fixed size and whose sumset has also fixed size. We cannot improve the bound given by Green and Morris; instead we will introduce a better bound for the number of \emph{exceptional} sets and at the same time use our result to prove that every non-exceptional set has certain structure. Specifically, we will proceed towards the proof of the following statement.
 
 \begin{prp}\label{counting}
  Let $\delta>0$ and $N>N_0(\delta)$ be a large natural number. For every $k,m\in\mathbb{N}$ satisfying $\delta N\le k\le N$, $2k-1\le m\le 2N-1$ the following statement is true. The family of all subsets $X\subset\{1,\ldots, N\}$ with $|X|=k$ and $|X+X|=m$ can be divided into two classes; one of them, the class of exceptional subsets, has cardinality at most $2^{\frac{m}{2}H(\frac{2k-1}{m})-\delta N}$, and each non-exceptional set (member of the other class) is almost contained in an arithmetic progression $P$ of length at most $(1+1200\delta)\frac{m}{2}$, so that we have $|(X+X)\setminus(P+P)|\le 24 \delta N$.
 \end{prp}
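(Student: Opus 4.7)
The plan is to apply the Green--Morris regularity lemma to $X$ and then invoke Theorem~\ref{pop} on the macroscopic profile of $X$. We first embed $X$ into $\mathbb{Z}/p\mathbb{Z}$ for a prime $p\in(2N,4N]$ so that $X+X$ does not wrap modulo $p$ and $|X+X|=m$ is preserved. We then apply the regularity lemma with parameter $\epsilon$ equal to a small multiple of $\delta$, obtaining a dilate $X^*=\lambda X$ and a prime $q$; write $a_i=|X^*\cap I_i(q)|$, $b_\ell=|(X^*+X^*)\cap I_\ell(q)|$, $S=\{i:a_i\ge\epsilon p/q\}$, and $n=|S|$, so that $\sum_i a_i=k$ and $\sum_\ell b_\ell=m$.

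The first and central step is to convert the sumset bound and the regularity conclusion into a popular-doubling estimate for $S\subset\mathbb{Z}/q\mathbb{Z}$. For each \emph{good} pair $(i,j)\in S^2$, the sumset $X^*_i+X^*_j$ has size at least $(2-\epsilon)p/q$ and is contained in $I_{i+j}\cup I_{i+j+1}$; all good pairs summing to a fixed $\ell$ thus contribute to the same two intervals, giving $b_\ell+b_{\ell+1}\ge(2-\epsilon)p/q$ whenever such a pair exists. Combined with $\sum_\ell(b_\ell+b_{\ell+1})=2m$ and the bound $\epsilon q^2$ on the number of bad pairs, a summation argument yields
\[
\sum_{\ell\in\mathbb{Z}/q\mathbb{Z}}\min(1_S*1_S(\ell),t)\le(2+O(\delta))\,nt
\]
for a threshold $t\asymp\delta n$.

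The second step is to lift $S$ to $\mathbb{Z}$ and apply Theorem~\ref{pop}. Since $n\le mq/(2p)+O(\epsilon q)\le q/2+O(\delta q)$, one can rotate the cyclic group and lift $S$ to a subset of $\mathbb{Z}$ in such a way that the $\mathbb{Z}$-convolution matches the $\mathbb{Z}/q\mathbb{Z}$-convolution up to a controlled wrap-around error. In the favourable case Theorem~\ref{pop} produces an AP of length at most $(1+O(\delta))n$ in $\mathbb{Z}$ whose projection to $\mathbb{Z}/q\mathbb{Z}$ covers all but $O(t)$ points of $S$. Translating this back through the dilate $\lambda$ and the original embedding yields an AP $P\subset\mathbb{Z}$ of length at most $(1+1200\delta)m/2$ (using that $np/q$ lies within $O(\epsilon)m$ of $m/2$) with $|(X+X)\setminus(P+P)|\le 24\delta N$. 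We declare such $X$ \emph{non-exceptional}.

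For exceptional $X$, tracing backwards, the failure of the hypothesis of Theorem~\ref{pop} forces $np/q$ to be smaller than $m/2$ by an amount $\Omega(\delta N)$. We bound the number of such $X$ by the standard three-factor product
\[
\binom{q}{n}\cdot\Bigl(\tfrac{p}{q}+1\Bigr)^n\cdot\prod_{i\in S}\binom{|I_i(q)|}{a_i};
\]
the first two factors are $2^{o(N)}$ since $q\le p^{1-\delta_0(\epsilon)}$, and the third is at most $2^{(np/q)H(k/(np/q))}$ by concavity of the binary entropy. Substituting $np/q\le m/2-\Omega(\delta)N$ and using that $H$ is Lipschitz near $(2k-1)/m$ produces the promised $2^{\delta N}$ saving. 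The main obstacle is the quantitative bookkeeping: one must calibrate $\epsilon$, the threshold $t$, and the various $O(\delta)$ slack terms so that the hypothesis of Theorem~\ref{pop} is provably satisfied in the non-exceptional case, and so that its failure automatically translates into the $\Omega(\delta N)$ deficit in $np/q$ used for counting; this calibration is what dictates the specific constants $1200$ and $24$ appearing in the statement.
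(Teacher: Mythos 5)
Your overall architecture matches the paper's (embed in $\mathbb{Z}/p\mathbb{Z}$, run the Green--Morris regularity lemma, pass to the fibre-set $S\subset\mathbb{Z}/q\mathbb{Z}$, derive a popular-doubling bound for $1_S*1_S$, apply Theorem~\ref{pop}, and count the exceptional sets by entropy), but two steps that carry much of the real work are missing or wrongly justified. First, your lift of $S$ from $\mathbb{Z}/q\mathbb{Z}$ to $\mathbb{Z}$ is justified only by the cardinality bound $|S|\le q/2+O(\delta q)$, and that is not enough: a subset of $\mathbb{Z}/q\mathbb{Z}$ of density about $1/2$ need not be contained, after any rotation, in a segment of length $q/2$, so the $\mathbb{Z}$-convolution and the $\mathbb{Z}/q\mathbb{Z}$-convolution need not agree up to a controlled error, and Theorem~\ref{pop} (which is genuinely a statement about $\mathbb{Z}$ --- the paper explicitly cannot prove the cyclic analogue, since that would require handling doubling beyond $2+\tfrac13$) cannot be invoked. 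The paper establishes the needed Freiman $2$-isomorphism by a separate geometric argument: a single pair with $|A_i^*+A_j^*|\ge(2-\epsilon)p/q$ sits inside the dilate $\{2\lambda,\dots,2N\lambda\}$, Lemma~\ref{int-prog} turns this into a long progression of difference $\lambda'=\lambda^{-1}$ inside $[2,2N]$, forcing $|\lambda'|\le q/7$, and only then does every $i\in S$ land in a short progression of $\mathbb{Z}/q\mathbb{Z}$. Second, even once Theorem~\ref{pop} gives a progression $Q\subset\mathbb{Z}/q\mathbb{Z}$ of length $(1+O(\delta))|S|$ covering most of $S$, ``translating back through the dilate'' does not yield the progression $P$ of length $(1+1200\delta)\tfrac m2$: the common difference $d$ of $Q$ is a priori arbitrary, and a union of fibres $I_i$ over a progression of large difference is nothing like a progression of length $\approx m/2$ in $\mathbb{Z}$. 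The paper spends a substantial part of the proof bounding $d$ (Pollard's theorem applied to $S\cap Q$, the inequality $|T+\{0,1\}|\le(1+\delta)mq/p$, and the fact that $Q+Q$ is long force $d\le 211\delta mq/p$) before it can patch the fibres into an interval-like $P$ and deduce $|(X+X)\setminus(P+P)|\le 24\delta N$. Your sketch contains no substitute for this step.

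There is also a structural problem, not a mere calibration issue, in your treatment of the exceptional class. You want exceptionality to mean a deficit $np/q\le m/2-\Omega(\delta)N$ so that the entropy computation saves a factor $2^{\delta N}$; but in the admissible regime $m\asymp\delta N$ (recall only $m\ge 2k-1\ge 2\delta N-1$ is assumed) such a deficit is a \emph{constant} fraction of $m/2$, so its negation only gives $|S|\ge(\tfrac12-\Omega(1))\tfrac{mq}{p}$, and then the bound $\sum_y\min(1_S*1_S(y),t)\le(1+O(\delta))\tfrac{mqt}{p}$ translates into popular doubling $2+\Omega(1)$ for $S$, outside the range $\delta+\tfrac{5t}{N}\le\tfrac14$ where Theorem~\ref{pop} applies. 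The paper instead defines exceptionality by the \emph{relative} deficit $|S|\le(\tfrac12-2\delta)\tfrac{mq}{p}$, which is exactly what keeps the non-exceptional doubling at $2+\tfrac{12\delta}{1-4\delta}$; no tuning of constants lets you have both your absolute deficit and the hypothesis of Theorem~\ref{pop} simultaneously, so the dichotomy must be set up as in the paper (with the entropy saving then extracted from the relative deficit). Two smaller points: your three-factor count omits the up to $\epsilon p$ elements of $X^*$ lying outside $\bigcup_{i\in S}I_i$ (the paper's $\sum_{j\le\epsilon p}\binom{p}{j}$ factor), and taking $\epsilon$ a constant multiple of $\delta$ is too large --- the bad-pair term $\epsilon q^2$ must be dominated by $\delta\cdot t\cdot|S|\asymp\delta^4 q^2$ in the worst case, which is why the paper takes $\epsilon=2^{-7}\delta^4$.
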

 
 For the definition of the function $H$, see the appendix.

 Before we proceed, let us make a few remarks. Firstly, our result is only valid in subsets of integers and not in the cyclic groups of prime order; the reason for that is Theorem \ref{pop} is of the same kind. Secondly, even in that case it does not improve the estimate of Green and Morris on the number of \emph{all} subsets with $|X|=k$ and $|X+X|=m$; however the additional structure allows us to prove Theorem \ref{prob}.
 
 Since we are following Green and Morris argument, we will also need Pollard's Theorem. It has already appeared in the introduction, but let us state once again in a somewhat more precise form.
 
 \begin{thm}[Pollard]
  Let $p$ be a prime number and let $A,B\subset\mathbb{Z}/p\mathbb{Z}$ be two sets. Let $t$ be an integer satisfying 
  \begin{equation*}
   \max(0, |A|+|B|-p)\le t\le\min(|A|,|B|).
  \end{equation*}
  Then the following inequality holds:
  \begin{equation*}
   \sum_{x\in\mathbb{Z}/p\mathbb{Z}}\min(1_A*1_B(x),t)\ge t(|A|+|B|-t).
  \end{equation*}
 \end{thm}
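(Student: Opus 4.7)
The plan is to prove Pollard's theorem via a level-set decomposition that reduces the inequality to Cauchy--Davenport applied iteratively. Writing $r(x):=1_A*1_B(x)$ and $S_k:=\{x\in\mathbb{Z}/p\mathbb{Z}:r(x)\ge k\}$, one has the elementary identity
\begin{equation*}
 \sum_{x\in\mathbb{Z}/p\mathbb{Z}}\min(r(x),t)=\sum_{k=1}^{t}|S_k|,
\end{equation*}
so the inequality reduces to estimates on the individual $|S_k|$. Two such estimates will be combined: first, the trivial inclusion-exclusion bound $r(x)\ge\max(0,|A|+|B|-p)$, which yields $|S_k|=p$ for all $k\le|A|+|B|-p$; second, the Cauchy--Davenport-type bound $|S_k|\ge\min(p,|A|+|B|-2k+1)$, valid for $1\le k\le\min(|A|,|B|)$. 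Splitting the sum at $k=|A|+|B|-p$ and applying these bounds on either side, a short algebraic computation shows $\sum_{k=1}^t|S_k|\ge t(|A|+|B|-t)$ under the stated hypotheses $\max(0,|A|+|B|-p)\le t\le\min(|A|,|B|)$.

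The first bound is immediate. For the second, the case $k=1$ is precisely Cauchy--Davenport applied to $S_1=A+B$. For $k\ge 2$ my plan is to use the Dyson $e$-transform: for $e\in\mathbb{Z}/p\mathbb{Z}$, replace $(A,B)$ by $(A\cup(B+e),\;B\cap(A-e))$. This operation preserves both $|A|+|B|$ and $A+B$, and with a suitable choice of $e$ it does not shrink any $|S_k|$. Iterating the transform carries $(A,B)$ to a canonical configuration in which (after translation) one of the sets is contained in the other; in this final configuration one can exhibit subsets $A'\subseteq A$, $B'\subseteq B$ of sizes $|A|-k+1$ and $|B|-k+1$ with $A'+B'\subseteq S_k$, and then a single application of Cauchy--Davenport to $A'+B'$ yields $|S_k|\ge|A|+|B|-2k+1$.

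The main obstacle is showing that the $e$-transform can be arranged so as not to shrink the higher level sets $S_k$, and not only the sumset $S_1=A+B$; this requires a careful multiplicity comparison of the representation function $r$ before and after the transform. A self-contained alternative, closer in spirit to Section 3 of the present paper, is to run an induction on $|A|+|B|$, peeling off at each step an element chosen to strictly decrease a suitable monovariant, and to handle the degenerate cases by Kneser's theorem (which in $\mathbb{Z}/p\mathbb{Z}$, with $p$ prime, forces the sumset $A+B$ to be all of $\mathbb{Z}/p\mathbb{Z}$, so that $|S_k|=p$ is immediate).
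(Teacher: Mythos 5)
The paper itself does not prove this theorem: it is quoted as a classical result of Pollard \cite{pollard-74}, so your proposal has to be judged on its own terms, and as it stands it rests on a false lemma. The level-set identity and the observation that $|S_k|=p$ for $k\le |A|+|B|-p$ are fine, and your algebra does correctly reduce the inequality to the per-level estimate $|S_k|\ge\min(p,|A|+|B|-2k+1)$. But that estimate is false for $k\ge 2$. Take $p=7$ and $A=B=\{0,1,3\}$, $k=3$: having $|S_3|\ge 1$ would require some $x$ with $A\subseteq x-A$, i.e.\ $x-A=A$, and no such $x$ exists, so $S_3=\emptyset$, while your bound predicts $|S_3|\ge\min(7,\,6-6+1)=1$. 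In general, for $k=\min(|A|,|B|)$ your bound forces the existence of $x$ with $A\subseteq x-B$ (after possibly swapping roles), which generic pairs of sets do not admit.

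The failure is structural rather than an accident of the example: in that same example one has $\sum_x\min(1_A*1_A(x),3)=9=t(|A|+|B|-t)$, so Pollard holds with \emph{equality} at $t=3$ even though $|S_3|=0$; the deficit at the top level is compensated by $|S_1|=6>|A|+|B|-1$. Hence no pointwise bound of the shape you propose can be true, and the Dyson $e$-transform cannot be ``arranged so as not to shrink any $|S_k|$'' --- you would be steering toward a false statement. This is exactly the obstacle you flag, but it is not a technical difficulty to be overcome; it kills the per-level strategy. Your closing alternative (induction on $|A|+|B|$ with an unspecified monovariant, Kneser for the degenerate cases) is too vague to count as a proof. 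A correct elementary argument --- Pollard's original one, or the standard $e$-transform proof --- runs an induction in which the quantity being tracked is the whole truncated sum $\sum_{k=1}^t|S_k|$ (equivalently $\sum_x\min(1_A*1_B(x),t)$), never the individual level sets, precisely because only the sum, and not each $|S_k|$, satisfies the desired lower bound.
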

 
 Also, since Theorem \ref{pop} refers to the subsets of integers, we have to make it more compatible with the regularity lemma, which refers to the subsets of a finite group. To link those two statements, let us prove the following lemma.
 
 \begin{lm}\label{int-prog}
  Let $p$ be a prime and let $P,Q\subset\mathbb{Z}/p\mathbb{Z}$ be arithmetic progresssions satisfying $|P|\le\frac{p}{4}$ and $|P\cap Q|\ge\frac{|Q|}{2}+1$. Then the set $P\cap Q$ is an arithmetic progression with the same common difference as $Q$.
 \end{lm}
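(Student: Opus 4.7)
The plan is to normalize $P$ to be an integer interval, then lift $Q$ to $\mathbb{Z}$ and argue that the assumption $|P\cap Q|>|Q|/2$ forces $Q$ to visit only one ``translate'' of $P$.

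First I would apply the dilation $x\mapsto d_P^{-1}x$ of $\mathbb{Z}/p\mathbb{Z}$ followed by a shift to reduce to $P=\{0,1,\ldots,L-1\}$, with $L=|P|\le p/4$. Under this map $Q$ becomes an arithmetic progression with some common difference $d\pmod p$, and the claim now says that $P\cap Q$ is a progression with the same $d$. The hypothesis $|P\cap Q|\ge|Q|/2+1$ implies, via pigeonhole on pairs of consecutive indices of $Q$, the existence of two consecutive terms $q_j,q_{j+1}$ both lying in $P$; since they differ by $d\pmod p$ and both sit in an interval of length $L$, the signed residue of $d$ has absolute value strictly less than $L$. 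Reversing the direction of $Q$ if necessary (which preserves $Q$ as a set and the desired conclusion), I may assume $0<d<L$.

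Next I would lift: write $\tilde q_j=b+jd\in\mathbb{Z}$ for $0\le j<M=|Q|$ with $b\in[0,p)$. Then $q_j\in P$ iff $\tilde q_j$ lies in one of the ``windows'' $[kp,kp+L-1]$ for some $k\ge 0$. Because the step $d<L$, the indices $j$ hitting a fixed window form one contiguous \emph{run}, while the gap in $j$ between two successive runs is at least $(p-L)/d-1$. Let $c$ denote the number of windows visited; if $c=1$, then $S:=\{j:q_j\in P\}$ is a set of consecutive integers, so $P\cap Q$ is the desired arithmetic progression with common difference $d$.

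Assume for contradiction that $c\ge 2$. Combining (i)~$|S|\le c\lceil L/d\rceil$, (ii)~total gap length $\ge (c-1)((p-L)/d-1)$, and (iii)~the span of all runs and gaps fits within $M$ indices, together with $|S|\ge M/2+1$, a short algebraic manipulation should yield $c<1+(L-d)/(p-2L-2d)$. The assumptions $L\le p/4$ and $d<L$ force $3L+d<p$, equivalently $L-d<p-2L-2d$, which gives $c<2$, a contradiction. The main obstacle is the delicacy of this last step: both the bound $L\le p/4$ and the ``$+1$'' in the hypothesis $|P\cap Q|\ge|Q|/2+1$ are used tightly, so some care in the run/gap bounds is required to make the two-window scenario actually fail.
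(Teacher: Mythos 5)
Your proposal is correct and is essentially the paper's own argument in different bookkeeping: the paper also dilates $P$ to an interval, extracts $d<|P|$ from two consecutive elements of $Q$ in $P$, and then compares runs of $P\cap Q$ (each of size at most $\lceil |P|/d\rceil$) with the intervening blocks of $Q\setminus P$ (each of size at least $\lfloor (p-|P|)/d\rfloor$), reaching the contradiction $(2k-3)(2l-3)\le -1$ instead of your window count $c<2$. The algebra you deferred does go through: from $(c-1)\left(\frac{p-L}{d}-1\right)\le M-|S|\le |S|-2\le c\lceil L/d\rceil-2$ and $\lceil L/d\rceil<\frac{L}{d}+1$ one gets $c(p-2L-2d)<p-L-3d$, and since $3L+d<4L\le p$ this forces $c<2$ as you claimed.
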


 \begin{proof}
  By dilating if necessary, we can assume that $P$ is an interval (i.e. the common difference of $P$ is $1$). Since $|P\cap Q|\ge\frac{|Q|}{2}+1$, we know that there are two consecutive elements of $Q$ that belong to $P$. That means that the common difference of $Q$ is less than the size of $P$; let us denote it by $d$. Suppose for the sake of contradiction, that the intersection $P\cap Q$ is not a progression of common difference $d$. In other words, if we look at the elements of $Q$ in order, we see at least two separate groups of elements of $P\cap Q$ with at least one element of $Q\setminus P$ in between. Since the common difference of $Q$ is $d$, each group of elements of $P\cap Q$ has cardinality at most $\left\lceil\frac{|P|}{d}\right\rceil$ and each group of elements of $Q\setminus P$ (maybe except those containing the endpoints) has cardinality at least $\left\lfloor\frac{p-|P|}{d}\right\rfloor$. Also, if we dentote $l=\left\lceil\frac{|P|}{d}\right\rceil$, then 
  \begin{equation*}
   \left\lfloor\frac{p-|P|}{d}\right\rfloor>\frac{p-|P|}{d}-1\ge\frac{3|P|}{d}-1>3\left\lceil\frac{|P|}{d}\right\rceil-4=3l-4,
  \end{equation*}
  so in fact $\left\lfloor\frac{p-|P|}{d}\right\rfloor\ge 3l-3$. By $d<|P|$ we know that $l\ge 2$. Now denote by $k\ge 2$ the number of groups of elements of $P\cap Q$, we see that the number of groups of $Q\setminus P$ not containing the endpoins is $k-1$. By assumption $Q\setminus P$ has at least $2$ elements less than $P\cap Q$, which leads to the inequality
  \begin{equation*}
   kl\ge |P\cap Q|\ge|Q\setminus P|+2\ge(k-1)(3l-3)+2.
  \end{equation*}
  Rearranging gives $(2k-3)(2l-3)\le -1$, which is impossible since both of the factors are positive.
 \end{proof}

 Now we are ready to prove Proposition \ref{counting}.
 
 \begin{proof}[Proof of Proposition \ref{counting}]
  Suppose that $\delta>0$ is a sufficiently small constant. Let $N>N_0(\delta)$ be a large natural number and let $p\in [8N,16N]$ be a prime. We consider each subset of $[N]$ to be a subset of $\mathbb{Z}/p\mathbb{Z}$ via the natural embedding $[N]\hookrightarrow\mathbb{Z}/p\mathbb{Z}$. Then for a subset $A\subset [N]$ with $|A|\ge\delta N$ we can use the regularity lemma with $\epsilon=2^{-7}\delta^4$ to obtain a dilate $A^*=\lambda A$, a prime number $q$ and a corresponding partition $A_i^*=A^*\cap I_i$. We can assume that $N_0$ is so large that it forces $q\le\delta^2 p$. We know that for at least $(1-\epsilon)q^2$ pairs $(i,j)\in(\mathbb{Z}/q\mathbb{Z})^2$, either
  \begin{equation*}
   \min(|A_i^*|,|A_j^*|)\le\epsilon L\qquad\text{or}\qquad|A_i^*+A_j^*|\ge(2-\epsilon)L,
  \end{equation*}
  where $L=p/q$. Now let $S=\{i\in\mathbb{Z}/q\mathbb{Z}:|A_i^*|>\epsilon L\}$. We will call the set $A$ \emph{exceptional} if $|S|\le(\frac{1}{2}-2\delta)\frac{mq}{p}$ and non-exceptional otherwise. Now we need to check that those two classes actually satisfy postulated properties.
  
  The number of exceptional subsets can be estimated as follows: first we choose a prime $q\le\delta^2 p$, then we choose a set $S\subset\mathbb{Z}/q\mathbb{Z}$ of size at most $(\frac{1}{2}-2\delta)\frac{mq}{p}$; there are at most $2^q$ ways of doing that. Having chosen $S$, we specify $A^*$ by choosing $A^*\cap S'$ and $A^*\setminus S'$, where $S'=\bigcup_{i\in  S}I_i$. We take into account that $|A^*\setminus S'|\le \epsilon p$ to get that the number of choices of $A^*$ is bounded by 
  \begin{equation*}
   \sum_{q\le\delta^2 p}\left(2^q\sum_{j\le\epsilon p}\binom{p}{j}\binom{\lfloor (\frac{1}{2}-\delta)(1+\frac{q}{p})m\rfloor}{k-j}\right).
  \end{equation*}
  The bound $|S'|\le(\frac{1}{2}-2\delta)(1+\frac{q}{p})m$ comes from the fact the $S'$ is a union of at most $(\frac{1}{2}-\delta)\frac{mq}{p}$ sets of size at most $\frac{p}{q}+1$. Since $A$ is a dilate of $A^*$, the bound for the number of exceptional subsets is $p$ times the above quantity. Using the estimates from the appendix, we get the claimed bound.
  
  Now let us turn our attention to non-exceptional sets. Suppose that for a set $A$ the regularity lemma gave us a set $S$ of size $|S|>(\frac{1}{2}-2\delta)\frac{mq}{p}$. We would like to show that $S$ is Freiman 2-isomorphic to a set of integers and satisfies the assumption of Theorem \ref{pop}. To prove the former, note that by definition of $\epsilon$-regularity there has to be at least one pair $i,j$ with $|A^*_i+A^*_j|\ge(2-\epsilon)p/q$. The set $A^*_i+A^*_j$ is contained both in $I_{i+j}\cup I_{i+j+1}$ and in $A^*+A^*\subset\{2\lambda, 3\lambda,\ldots,2N\lambda\}$. They are progressions satisfying the assumptions of Lemma \ref{int-prog}; in that case we can argue that the intersection $(I_{i+j}\cup I_{i+j+1})\cap\{2\lambda, 3\lambda,\ldots,2N\lambda\}$ is an interval of length at least $(2-\epsilon)p/q$. Now let $\lambda'$ be an integer less than $\frac{p}{2}$ in absolute value and satisfying the congruence $\lambda\lambda'\equiv 1\pmod{p}$. It is easy to see that 
  \begin{equation*}
   |\lambda'|\le\frac{2N-2}{(2-\epsilon)p/q-1}\le \frac{q}{7}
  \end{equation*}
  since we have a progression of length $(2-\epsilon)p/q$ and common difference $\lambda'$ contained in the interval $[2,N]$. Note also that if $i\in S$, then the interval $I_i$ has nonempty intersection with $\{\lambda, 2\lambda,\ldots,N\lambda\}$. Therefore $\lambda'I_i$ intersects $\{1,2,\ldots,N\}$, but $\lambda'I_i$ is itself contained in an interval of length $|\lambda'|p/q\le\frac{p}{7}$. This interval has to be contained in $[-\frac{p}{7},N+\frac{p}{7}]\subset[-\frac{p}{7},\frac{2p}{7}]$ by the intersection property. Therefore $|\lambda'|i\in[-\frac{q}{7},\frac{2q}{7}]$ as an element of $\mathbb{Z}/q\mathbb{Z}$ and so all of $S$ has to be contained in an arithmetic progression of length less than $\frac{q}{2}$, as required.
  
  We also need to get the bound for $\sum_{y\in\mathbb{Z}/q\mathbb{Z}}\min(1_S*1_S(y),t)$ to be in position to use Theorem \ref{pop}. To do that, set $t=\lfloor 2^{-3}\delta^2 q\rfloor$ and let $T$ be the set of all $y\in\mathbb{Z}/q\mathbb{Z}$ for which there exist $i,j\in S$ with $|A_i^*+A_j^*|\ge(2-\epsilon)p/q$. The sumset $A_i^*+A_j^*$ is contained in $I_{i+j}+I_{i+j+1}$, which allows us to write
  \begin{multline*}
   \frac{(2-\epsilon)p|T|}{q}\le\sum_{y\in T} |(A+A)\cap (I_y\cup I_{y+1})|=\\
   =\sum_{y\in T+\{0,1\}}|(A+A)\cap I_y|+\sum_{y\in T\cap(T+1)}|(A+A)\cap I_y|\le\\
   \le|A+A|+\sum_{y\in T\cap(T+1)}|I_y|\le m+|T\cap(T+1)|\left(\frac{p}{q}+1\right).
  \end{multline*}
  Multiplying the above by $\frac{q}{p}$ we get $(2-\epsilon)|T|\le |T\cap(T+1)|(1+\frac{q}{p})+\frac{mq}{p}$, which leads to
  \begin{multline*}
   |T+\{0,1\}|=2|T|-|T\cap(T+1)|\le\frac{q(m+|T\cap(T+1)|)}{p}+\epsilon|T|\le\\
   \le\frac{q}{p}(m+q+\epsilon p)\le\frac{q(m+\delta^2 p)}{p}\le\frac{(1+\delta)mq}{p}.
  \end{multline*}
  This will be more useful later, but at the moment the most important thing for us is that $|T|\le\frac{(1+\delta)mq}{p}$. On the other hand, every $y\in(\mathbb{Z}/q\mathbb{Z})\setminus T$ corresponds to $1_S*1_S(y)$ ``bad'' pairs $(i,j)$ , for which $\min(|A_i^*|,|A_j^*|)>\epsilon p/q$, yet $|A_i^*+A_j^*|<(2-\epsilon)p/q$. The number of those does not exceed $\epsilon q^2$ and therefore we have
  \begin{multline*}
   \sum_{y\in\mathbb{Z}/q\mathbb{Z}}\min(1_S*1_S(y),t)=\sum_{y\in T}\min(1_S*1_S(y),t)+\sum_{y\not\in T}\min(1_S*1_S(y),t)\le\\
   \le t|T|+\epsilon q^2\le\frac{mqt}{p}\left(1+\delta+\frac{\epsilon pq}{mt}\right).
  \end{multline*}
  Combining the inequalities $t\ge2^{-4}\delta^2q$, $m\ge 2^{-3}\delta p$ and $\epsilon=2^{-7}\delta^4$, we can bound the above by $(1+2\delta)\frac{mqt}{p}$.  Recalling that $|S|\ge(\frac{1}{2}-2\delta)\frac{mq}{p}$, we see that we can use Theorem \ref{pop} if $\delta$ is sufficiently small. Indeed, we can rewrite the bounds as 
  \begin{equation*}
   \sum_{y\in\mathbb{Z}/q\mathbb{Z}}\min(1_S*1_S(y),t)\le\left(2+\frac{12\delta}{1-4\delta}\right)t|S|.
  \end{equation*}
  Since $\frac{1}{1-4\delta}\le\frac{25}{24}$, we can argue that all of $S$, perhaps except $\frac{5t}{2}$ elements, is contained in a progression $Q\subset\mathbb{Z}/q\mathbb{Z}$ of length at most $|S|(1+25\delta)+6t$. Now we use the assumption on $|T+\{0,1\}|$ to say something about the common difference of $Q$. By Pollard's Theorem applied to the set $S\cap Q$ we know that
  \begin{equation*}
   \sum_{y\in Q+Q}\min(1_S*1_S(y),t)\ge t(2|S\cap Q|-t)\ge t(2|S|-6t).
  \end{equation*}
  Subtracting off those elements of $Q+Q$ that are not in $T$, we get
  \begin{equation*}
   \sum_{y\in (Q+Q)\cap T}\min(1_S*1_S(y),t)\ge t(2|S|-6t)-\epsilon q^2.
  \end{equation*}
  This means that we have $|(Q+Q)\cap T|\ge 2|S|-6t-\frac{\epsilon q^2}{t}$, or in other words $|(Q+Q)\setminus T|\le 2(25\delta |S|+6t)+6t+\frac{\epsilon q^2}{t}=50\delta|S|+18t+\frac{\epsilon q^2}{t}$. By the estimate we already know $\sum_{y\in\mathbb{Z}/q\mathbb{Z}}\min(1_S*1_S(y),t)\le(1+\delta)\frac{mqt}{p}$ and Pollard's Theorem we see that $2|S|-t\le(1+\delta)\frac{mq}{p}\le\frac{60 mq}{50p}$, so $2|S|\le\frac{61 mq}{50p}.$ Also, $18t\le 18\frac{\delta mq}{p}$ and $\frac{\epsilon q^2}{t}\le\frac{\delta mq}{p}$ , which altogether gives $|(Q+Q)\setminus T|\le\frac{100\delta mq}{p}$. Now we will examine how the set $Q+Q$ behaves under addition of $\{0,1\}$. The elements of $Q+Q$ inside $T$ will form a subset of $T+\{0,1\}$ of size at most $\frac{(1+\delta)mq}{p}$. The part outside $T$ will get at most doubled, so it will have size at most $\frac{200\delta mq}{p}$. Therefore $|Q+Q+\{0,1\}|\le\frac{(1+201\delta)mq}{p}$. But $Q+Q$ is a progression of length at least $2|S\cap Q|-1\ge 2|S|-5t-1\ge\frac{mq}{p}
(1-4\delta-6\delta)=\frac{(1-10\delta)mq}{p}$ and therefore adding $\{0,1\}$ to it produces at most $\frac{211\delta mq}{p}$ new elements. By dilating, we can assume that $Q$ is an interval and the set we are adding is $\{0,d\}$ for some $d$. But then we see that $d\le\frac{211\delta mq}{p}$. Now consider the dilation by $d$ inside $\mathbb{Z}/p\mathbb{Z}$; then the intervals $I_y$ for $y\in Q$ become progressions of common difference $d$, and $Q$ corresponds to an ``interval'' of such progressions. Their union is almost an interval itself --- the only problem being near endpoints, where the progressions do not necessarily start at the points we like. We can compensate this by adding at most $\frac{p}{q}$ points for each ``residue class $\pmod d$'' to get a genuine interval. The quotation marks mean that we are working $\pmod p$, so technically we cannot consider residue classes modulo other numbers, but we have proved that we use only half of the space, so we can pretend we work in the integers. In the  end 
we get an interval $P$ of length at most $(|Q|+2d)(\frac{p}{q}+1)$ containing almost all of $A$; estimating that gives 
  \begin{equation*}
   (|Q|+2d)(\frac{p}{q}+1)\le\frac{mq}{p}(\frac{1+201\delta}{2}+422\delta)(1+\delta^2)\frac{p}{q}\le\frac{(1+1200\delta)m}{2}.
  \end{equation*}
  Now note that the elements of $A+A$ outside $P+P$ are at worst the ones in $I_y$ with $y\not\in Q+Q$. Each $y\in T$ corresponds to at least $(1-\epsilon)\frac{p}{q}-1$ elements of $A+A$, so since $|(Q+Q)\cap T|\ge 2|S|-6t-\frac{\epsilon q^2}{t}\ge \frac{mq}{p}(1-4\delta-6\delta-\delta)=\frac{1-11\delta}{mq}{p}$, we see that the intersection $(A+A)\cap (P+P)$ has at least $\frac{1-11\delta}{mq}{p}((1-\epsilon)\frac{p}{q}-1)\ge(1-12\delta)m$ elements. That means that $|(A+A)\setminus (P+P)|\le12\delta m\le 24\delta N$, as claimed.
 \end{proof}
 
 \section{Proof of Theorem \ref{prob}}
 
 In this section we use the prevous results to get Theorem \ref{prob}; first let us note that it can be easily reduced to the following statement.
 
 \begin{prp}\label{recurrence}
  There exist absolute constants $C_0$, $\epsilon_0$, $k_0$ such that the following is true. Suppose that $A\subset\mathbb{N}$ is a set chosen randomly by picking each element of $\mathbb{N}$ independently with probability $\frac{1}{2}$. Then for every $k>k_0$ we have the inequality
  \begin{equation*}
   \mathbb{P}(|\mathbb{N}\setminus (A+A)|\ge k\text{ and }1\in A)\le C_0(2+\epsilon_0)^{-k/2}.
  \end{equation*}
 \end{prp}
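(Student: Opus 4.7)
The plan is to re-run the Green--Morris argument that gives $\mathbb{P}(|\mathbb{N}\setminus(A+A)|\ge k)=O(2^{-k/2})$, this time using Proposition~\ref{counting} in place of their counting lemma so as to improve the base of the exponent from $2$ to $2+\epsilon_0$. I would fix a small $\delta>0$, take $N=Ck$ for a large absolute constant $C$, and condition on the initial segment $X:=A\cap[1,N]$. A standard concentration argument (Hoeffding on $|X|$ together with a union bound over possible missing sums in $(2N,\infty)$) shows that, up to an ignorable factor $2^{-c_1N}$, one may assume $|X|\approx N/2$ and that all but $o(k)$ missing sums of $A+A$ lie in $[2,2N]$. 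Since $(A+A)\cap[2,2N]\supset X+X$, this forces $m:=|X+X|\le 2N-1-k+o(k)$.

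The bound then takes the form
\begin{equation*}
\mathbb{P}(|\mathbb{N}\setminus(A+A)|\ge k,\,1\in A)\le 2^{-c_1N}+\sum_{s,m}2^{-N}|\mathcal{F}_{s,m}|\cdot q_{s,m,k},
\end{equation*}
where $\mathcal{F}_{s,m}$ is the family of admissible $X\subset[1,N]$ with $|X|=s$, $|X+X|=m$, $1\in X$, and $q_{s,m,k}$ is the probability that the tail $A\cap(N,\infty)$ fails to cover more than $o(k)$ of the holes in $[2,2N]\setminus(X+X)$. Proposition~\ref{counting} splits $\mathcal{F}_{s,m}$ into an exceptional part of size $\le 2^{(m/2)H((2s-1)/m)-\delta N}$ and a non-exceptional part. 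The exceptional piece contributes at most $O(2^{-k/2-\delta Ck})$ after the summation (because the Green--Morris calculation already yields $O(2^{-k/2})$ from the pure counting), which beats $C_0(2+\epsilon_0)^{-k/2}$ provided $\delta C$ is large enough.

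For the non-exceptional part, each such $X$ lies, up to $O(\delta N)$ exceptions, in an arithmetic progression $P$ of length $L\le(1+1200\delta)m/2$, and there are only polynomially many candidate progressions. Fixing one $P$, the sumset $P+P$ is a progression of length $2L-1$ inside $[2,2N]$, so the holes $[2,2N]\setminus(P+P)$ form at most two intervals of total length $\ge k-O(\delta N)$. The event that the random extension of $A$ covers at most $o(k)$ of these holes needs now to be bounded: for each hole $h$, $\mathbb{P}(h\notin A+A)$ factorises as a product over disjoint pairs $\{a,h-a\}$ and equals roughly $(3/4)^{\Theta(h)}$, and by selecting across all holes simultaneously a family of pairwise disjoint pairs of total count $\ge(1+\epsilon_0/2)(k/2)$, one obtains $q_{s,m,k}\le(2+\epsilon_0)^{-(k-O(\delta N))/2}$ by genuine independence on the chosen pairs. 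Combining with the polynomial enumeration over $P$ and $(s,m)$ and the binomial count of subsets of $P$ produces the required bound after adjusting constants.

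The main obstacle lies in this last per-hole gain. The events $\{h\notin A+A\}$ are positively correlated (all are decreasing in $1_A$), so a naive multiplicative bound is not automatic. The remedy I would attempt is a combinatorial block decomposition: partition the pairs $\{a,h-a\}$ needed to cover the various holes into a maximum disjoint family, and verify that its size is bounded below by $(1+\epsilon_0)(k/2)$ uniformly across the structured configurations produced by Proposition~\ref{counting}. Making this uniform over all non-exceptional $X$, and then combining it with the choices of $\delta$ and $C$ to optimise all error terms, should pin down the explicit values of $\epsilon_0$, $C_0$, and $k_0$.
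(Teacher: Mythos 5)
There is a genuine gap at the step from which you extract the exponential gain, namely the claimed bound $q_{s,m,k}\le(2+\epsilon_0)^{-(k-O(\delta N))/2}$ for the random tail. After the structural reduction, the progression $P$ has length roughly $m/2\approx N-k/2$ and, in the admissible configurations, sits at the top of $[1,N]$; consequently the holes of $P+P$ (and essentially all holes of $X+X$) lie in an initial segment of length about $k$, far below $N$. A hole $h\le N$ can only be written as $a+b$ with $a,b\le N$, so the randomness of $A\cap(N,\infty)$ is powerless to cover it: in the relevant regime $q_{s,m,k}=1$. Your per-hole estimate $\mathbb{P}(h\notin A+A)\approx(3/4)^{\Theta(h)}$ secretly re-uses the coordinates in $[1,N]$ that were already frozen when you enumerated $\mathcal{F}_{s,m}$; one cannot both count over $X$ and treat those same coordinates as fresh randomness (for a fixed $X$ the events $h\notin X+X$ with $h\le N$ are deterministic). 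The correlation issue you flag as the ``main obstacle'' is therefore not the real obstruction; the obstruction is that there is no randomness left where the holes are.

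A decisive sanity check: your mechanism never makes $1\in A$ do any work beyond listing $1\in X$ in the family, which saves at most a bounded factor. If the scheme worked as described it would prove $\mathbb{P}(|\mathbb{N}\setminus(A+A)|\ge k)\le C_0(2+\epsilon_0)^{-k/2}$ unconditionally, contradicting the trivial lower bound $\mathbb{P}(|\mathbb{N}\setminus(A+A)|\ge k)\ge 2^{-\lceil k/2\rceil}$ (force $A\cap[1,\lceil k/2\rceil]=\emptyset$) and Theorem \ref{prob} itself. The paper's proof makes $1\in A$ essential: Proposition \ref{counting}, together with a case analysis on the missing elements, shows that an admissible non-exceptional $X=A\cap[10k]$ has fewer than $2\cdot 10^4\delta k$ elements below $k/2$, so the bottom part $B$ carries entropy only $H(4\cdot10^4\delta)k/2$; then, writing $C=X\cap(\frac{k+1}{2},k]$ and $D=X\cap(k,\infty)$, the sets $1+C$, $C+C$, $D+D$ are pairwise disjoint subsets of $A+A$, which forces at least $|C|+|C+C|-k$ holes to occur beyond $2k$, an event controlled by the Green--Morris probability bound, while the number of choices of $C$ with $|C|=l$, $|C+C|=m$ is at most $2^{\delta k}\binom{m/2}{l}$. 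The uniform bound $\binom{m/2}{l}\min\bigl(1,(2-\epsilon)^{-(l+m-k)/2}\bigr)\le(2-\eta)^{k/2}$, played against the weight $2^{-k}$ of the pair $(B,C)$, is the true source of $(2+\epsilon_0)^{-k/2}$. To repair your argument you would need to inject an analogous use of the extra, disjointly placed sums $1+C$; the tail alone cannot supply the gain.
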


 Comparing this to what we are trying to prove in the end, this statement says that for a set $A$ satisfying $|\mathbb{N}\setminus (A+A)|\ge k$ it is exponentially (in $k$) unlikely to contain $1$.
 
 \begin{proof}[Proof of Theorem \ref{prob} assuming Proposition \ref{recurrence}]
  Let $A\subset\mathbb{N}$ be a random subset. Note that the conditional distribution of $A$ on the event $1\not\in A$ is exactly the same as the initial distribution of $A+1$. This, and the fact that $\mathbb{P}(1\in A)=\frac{1}{2}$, allows us to write (for each $k\ge 2$):
  \begin{multline*}
   p_k-p_{k-2}=2^{k/2}\cdot\mathbb{P}(|\mathbb{N}\setminus (A+A)|\ge k)-2^{(k-2)/2}\cdot\mathbb{P}(|\mathbb{N}\setminus (A+A)|\ge k-2)\\
   =2^{k/2}(\mathbb{P}(|\mathbb{N}\setminus (A+A)|\ge k)-\tfrac{1}{2}\mathbb{P}(|\mathbb{N}\setminus ((A+1)+(A+1))|\ge k)=\\
   =2^{k/2}(\mathbb{P}(|\mathbb{N}\setminus (A+A)|\ge k)-\mathbb{P}(|\mathbb{N}\setminus (A+A)|\ge k\text{ and }1\not\in A))=\\
   =2^{k/2}\cdot\mathbb{P}(|\mathbb{N}\setminus (A+A)|\ge k\text{ and }1\in A).
  \end{multline*}
  The above quantity is obviously nonnegative, which makes both sequences $\{p_{2k}\}$ and $\{p_{2k+1}\}$ increasing. Proposition \ref{recurrence} allows us to say they are bounded. Indeed, if $k>k_0$, then
  \begin{multline*}
   p_k-p_{k-2}=2^{k/2}\cdot\mathbb{P}(|\mathbb{N}\setminus (A+A)|\ge k\text{ and }1\in A)\le\\
   \le 2^{k/2}\cdot C_0(2+\epsilon_0)^{-k/2}=C_0\left(\frac{2}{2+\epsilon_0}\right)^{k/2}.
  \end{multline*}
  Let $\lambda=\frac{2}{2+\epsilon_0}<1$. Summing the above inequalities we see that for any $k>k_0$ we get
  \begin{equation*}
   p_k-p_{k_0}\le \sum_{s=k_0/2}^\infty C_0\lambda^s=\frac{C_0\lambda^{k_0/2}}{1-\lambda}<\infty.
  \end{equation*}
  Actually the above is true only for $k$ of the same parity as $k_0$; for the remaining values we simply replace all instances of $k_0$ with $k_0+1$.
 \end{proof}

 For the rest of this section we will focus on proving Proposition \ref{recurrence}.
 
 \begin{proof}[Proof of Proposition \ref{recurrence}]
  Let $\delta$ be a small quantity and let $k>N_0(\delta)$ from the statement of the Proposition \ref{counting}. Consider the set $X=A\cap[10k]$. First, following Green and Morris, let us estimate the probability that $A$ misses one of the elements greater than $10k$. For each such element $m$ we have at least $\lfloor\frac{m}{2}\rfloor$ pairs of (not necessarily distinct) natural numbers $u,v$ with $u+v=m$; the probability that $u,v\in A$ is at least $\frac{1}{4}$. Therefore the probability that $m\not\in A+A$ is bounded by $(\frac{3}{4})^{(m-1)/2}$ and the total contribution for numbers greater than $10k$ is at most
  \begin{equation*}
   \sum_{m=10k+1}^\infty\left(\frac{3}{4}\right)^{(m-1)/2}<2^{-k}.
  \end{equation*}
  We consider this a quantity less than $C_0(2+\epsilon_0)^{-k/2}$; our goal is to divide the set of admissible events into classes with probability of each being bounded by this expression.
  
  From this point on we can assume that $A$ contains all the numbers greater than $10k$ and consequently $\mathbb{N}\setminus (A+A)=[10k]\setminus(X+X)$. Let us estimate the probability that $|X|\le10\delta k$; since $X$ is uniformly distributed among all subsets of $[10k]$, we can estimate it by $2^{10k(H(\delta)-1)}$. If $\delta$ is small enough, this implies the claimed bound.
  
  Now assume that $|X|>10\delta k$ and estimate the probability that $X$ is exceptional (according to the statement of Proposition \ref{counting}) and obeys the inequality $|[10k]\setminus(X+X)|\ge k$. This is bounded by
  \begin{equation*}
   2^{-10k}\sum_{k'>10\delta k}\sum_{m=2k'-1}^{19k}2^{\frac{m}{2}H(\frac{2k'-1}{m})-10\delta k}.
  \end{equation*}
  The estimate $m\le 19k$ comes from the fact that $X+X$ misses at least $k$ points from $[10k]\subset [20k]$. Bounding each term crudely, i.e. using $H(x)\le 1$, $m\le 19k$, we get the bound of $200k^2 2^{-k/2-10\delta k}$, again as good as we need.
  
%   Now let us consider non-exceptional subsets $X$. Using the Green-Morris Theorem on counting \emph{all} subsets \cite[Proposition 3.1]{green-15}, we know that the number of non-exceptional subsets with $|X+X|=m$ is bounded by
  \begin{equation*}
   \sum_{k'=0}^{m/2}2^{10\delta k}\binom{m/2}{k'}.
  \end{equation*}
  If we restrict the range of $m\le (19-30\delta)k$, the above expression divided by $2^{10k}$ is still bounded as we need. Therefore we can assume that in fact $|X+X|\ge (19-30\delta)k$.

  Our bound gives us that the corresponding progression $P$ has size at least $(1-10\delta)\frac{m}{2}\ge(\frac{19}{2}-10^3\delta)k$. Therefore it has common difference $1$. Suppose now that $X+X$ contains at least $2\cdot10^4\delta k$ elements less than $k/2$. We know that $(X+X)\setminus (P+P)\le 12\delta m\le 240\delta k$, which means that the least element of $P$ is at most $(1-19000\delta) k/2$. On the other hand, since $|P|\le\frac{1-1200\delta}{2}m\le (\frac{19}{2}-12000\delta)k,$ so there are at least $7000\delta k$ elements of $(10k,20k]$ not in $P+P$, which belong to $A+A$ anyway.
  But out of them only $240\delta k$ can belong to $X+X$, so that leaves over $6000\delta k$ elements of $(A+A)\setminus (X+X)$ in $[20k]$. Adding to that $m\ge (19-30\delta)k$ elements of $X+X$ we see that $A+A$ in fact misses less than $k$ elements, so this case cannot hold.
  
  Suppose now that $A$ (equivalently: $X$) contains less than $2\cdot10^4\delta k$ elements less than $k/2$. So far we have not use the fact that $1\in A$. We are going to do this now. Let $B=X\cap(0,\frac{k+1}{2}]$, $C=X\cap(\frac{k+1}{2},k]$ and $D=X\cap(k,+\infty)$. Clearly $A+A\supset (1+C)\cup(C+C)\cup(D+D)$ and those subsets are disjoint. So certainly $|\mathbb{N}\setminus(A+A)|\ge k$ if 
  \begin{equation*}
   |\mathbb{N}\setminus(D+D-2k)|\ge k-(2k-|C|-|C+C|)=|C|+|C+C|-k.
  \end{equation*}
 By Green-Morris estimate \cite[Theorem 1.3]{green-15} we can assume that the probability of the latter is bounded by $C_\epsilon(2-\epsilon)^{-(|C|+|C+C|-k)}$ for any $\epsilon>0$ and a suitable constant $C_\epsilon>0$. Note that the probability cannot exceed $1$, so the trivial bound is actually better if $|C|+|C+C|<k$. Putting everything together, the total probability that $|\mathbb{N}\setminus(A+A)|\ge k$ is bounded by
  \begin{equation*}
   2^{-k/2}\sum_B 2^{-k/2}\sum_C C_\epsilon\cdot\min (1,(2-\epsilon)^{-(|C|+|C+C|-k)/2}).
  \end{equation*}
  Now by Green-Morris bound on the number of sets with small sumset \cite[Proposition 3.1]{green-15} we can divide the inner sum into classes depending on the size of $|C|$ and $|C+C|$. This way we get a bound of
  \begin{equation*}
   C_\epsilon\cdot2^{-k}\sum_B \sum_{l,m\le k} 2^{\delta k}\binom{m/2}{l}\cdot\min (1,(2-\epsilon)^{-(l+m-k)/2}).
  \end{equation*}
  Now we have the upper bound on the size of $B$, which turns the outer sum into additional coefficient $2^{kH(4\cdot 10^4\delta)/2}$. The only way the above expression could fail to be bounded as we need is if we could find among the expressions $\binom{m/2}{l}\cdot\min (1,(2-\epsilon)^{-(l+m-k)/2})$ one that is greater than $(2-\eta)^{k/2}$ for some small value of $\eta$. This firstly requires $m$ to be close to $k$ as the binomial coefficient $\binom{m/2}{l}\le2^{m/2}$ has to be at least $(2-\eta)^{k/2}$. Also, we need to have $(2-\epsilon)^{-(l+m-k)/2}\ge(1-\frac{\eta}{2})^{k/2}$, which requires $l+m$ to be not much bigger than $k$. Those two in turn imply that $l$ is small compared to $k$, in which case $\binom{m/2}{k}$ does not exceed $(2-\eta)^{k/2}$. This indicates that our initial assumption was false and our bound in fact is always satisfied.
 \end{proof}
 
 \appendix
 
 \section{Estimates of binomial coefficients}
 
 In the appendix we are going to prove some useful inequalities concerning binomial coefficients. Before we do that, let us define the binary entropy function $H:[0,1]\to\mathbb{R}$ as $H(t)=t\log_2\frac{1}{t}+(1-t)\log_2\frac{1}{1-t}$. Note that it does not quite make sense if $t=0$ or $t=1$, so we extend $H$ continuously by setting $H(0)=H(1)=0$. One can easily prove that $0\le H(t)\le 1$ for all $t\in [0,1]$ with the only extremal values being $H(0)=H(1)=0$ and $H(\frac{1}{2})=1$. Also, $H$ is continuous, increasing on $[0,\frac{1}{2}]$, decreasing on $[\frac{1}{2},1]$, and obeys the equality $H(t)=H(1-t)$. By calulating the second derivative, one can see that $H$ is concave, which together with previous observations leads to the triangle inequality $H(|x+y|)\le H(|x|)+H(|y|)$ valid for all $x,y$ for which all the expressions make sense.
  
 \begin{lm}\label{est}
  Let $n,k\ge 0$ be integers and let $\delta\in [0,\frac{1}{2}]$ be a real number. Then the following inequalities are true:
  \begin{gather*}
   \frac{2^{nH(\frac{k}{n})}}{n+1}\le \binom{n}{k}\le 2^{nH(\frac{k}{n})},\\
   \sum_{j=0}^{\lfloor\delta{n}\rfloor}\binom{n}{j}\le 2^{nH(\delta)}.
  \end{gather*}
 \end{lm}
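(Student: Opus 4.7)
The plan is a direct application of the binomial theorem with a well-chosen parameter $p$. For the first pair of inequalities, I would set $p=k/n$ and expand
\begin{equation*}
 1=\bigl(p+(1-p)\bigr)^n=\sum_{j=0}^n T_j,\qquad T_j:=\binom{n}{j}p^j(1-p)^{n-j}.
\end{equation*}
A quick computation shows $\log_2\bigl(p^{k}(1-p)^{n-k}\bigr)=-nH(k/n)$ for this choice of $p$. The upper bound $\binom{n}{k}\le 2^{nH(k/n)}$ then follows from $T_k\le 1$ alone, after rearranging.

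For the lower bound I would first verify that among all $n+1$ terms, $T_k$ is the largest. The ratio
\begin{equation*}
 \frac{T_{j+1}}{T_j}=\frac{n-j}{j+1}\cdot\frac{p}{1-p}=\frac{(n-j)k}{(j+1)(n-k)}
\end{equation*}
is $>1$ for $j<k$ and $<1$ for $j\ge k$, so $T_j$ is unimodal with maximum at $j=k$. Therefore $1=\sum_j T_j\le(n+1)T_k$, which yields $\binom{n}{k}\ge 2^{nH(k/n)}/(n+1)$ after rearranging.

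For the second inequality I would use the same identity with $p=\delta$. Because $\delta\le 1/2\le 1-\delta$, the geometric factor
\begin{equation*}
 \delta^j(1-\delta)^{n-j}=(1-\delta)^n\left(\frac{\delta}{1-\delta}\right)^j
\end{equation*}
is nonincreasing in $j$. Hence for every $j\le\lfloor\delta n\rfloor\le\delta n$ we have the pointwise bound $\delta^j(1-\delta)^{n-j}\ge\delta^{\delta n}(1-\delta)^{(1-\delta)n}=2^{-nH(\delta)}$. Substituting into the binomial identity gives
\begin{equation*}
 1\ge\sum_{j=0}^{\lfloor\delta n\rfloor}\binom{n}{j}\delta^j(1-\delta)^{n-j}\ge 2^{-nH(\delta)}\sum_{j=0}^{\lfloor\delta n\rfloor}\binom{n}{j},
\end{equation*}
which rearranges to the claim.

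There is no real obstacle here; these are textbook-level entropy estimates. The only non-mechanical ingredients are the unimodality of $T_j$ in the first part (a single explicit ratio check) and the monotonicity of $j\mapsto\delta^j(1-\delta)^{n-j}$ in the second (immediate from $\delta\le 1-\delta$), both of which follow from one line of arithmetic.
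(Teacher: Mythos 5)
Your proof is correct and is essentially the same argument as the paper's: the paper phrases the identity $\sum_j\binom{n}{j}p^j(1-p)^{n-j}=1$ probabilistically (a random subset including each element with probability $p$), but the key steps --- bounding the single term $\binom{n}{k}2^{-nH(k/n)}$ by $1$ and by $\frac{1}{n+1}$ via maximality among the $n+1$ terms, and the pointwise bound $\delta^j(1-\delta)^{n-j}\ge 2^{-nH(\delta)}$ for $j\le\delta n$ using $\delta\le 1-\delta$ --- are identical. Your explicit ratio check just spells out what the paper leaves as ``comparing two consecutive summands.''
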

 
 \begin{proof}
  Let us begin with the last inequality. Take a set of size $n$ and choose a subset of it at random by picking each element independently with probability $\delta$. Then for any $j\le\delta n$ the probability that a given $j$-element subset is chosen is $\delta^j(1-\delta)^{n-j}$. Since $\delta\le 1-\delta$, we can bound it from below by
  \begin{equation*}
   \delta^j(1-\delta)^{n-j}\ge\delta^{\delta n}(1-\delta)^{(1-\delta)n}=2^{-nH(\delta)}.
  \end{equation*}
  But the total probability cannot exceed $1$, so the number of all those sets, equal to $\sum_{j=0}^{\lfloor\delta{n}\rfloor}\binom{n}{j}$, has to be at most $2^{nH(\delta)}$.
  
  Turning to the first two inequalities, assume without loss of generality that $k\le\frac{n}{2}$; we can do that as we can always replace $k$ with $n-k$, given that $\binom{n}{k}=\binom{n}{n-k}$ and $H(\frac{k}{n})=H(1-\frac{k}{n})=H(\frac{n-k}{n})$.
  In this case let $\delta=\frac{k}{n}\le\frac{1}{2}$ and consider again the same random experiment. Writing the total probability as a sum of probabilities of choosing a particular set, we get
  \begin{equation*}
   \sum_{j=0}^n\binom{n}{j}\delta^j(1-\delta)^{n-j}=1.
  \end{equation*}
  Note that the expression $\binom{n}{k}2^{-nH(\frac{k}{n})}$ is one of the summands in the above sum, and by comparing two consecutive ones we can check that is actually the largest out of $n+1$ summands. Therefore $\frac{1}{n+1}\le\binom{n}{k}2^{-nH(\frac{k}{n})}\le 1$, as needed.
 \end{proof}

\end{document}